\documentclass[11pt,a4paper]{article}
\usepackage[utf8]{inputenc}
\usepackage{amsmath}
\usepackage{amsfonts}
\usepackage{amssymb}
\usepackage{amsthm}
\usepackage{setspace}
\usepackage{hyperref}
\usepackage{graphicx}
\usepackage{stmaryrd}
\usepackage{mathtools}
\usepackage{subcaption}
\usepackage[T1]{fontenc}
\usepackage{xcolor}
\usepackage{float}
\usepackage{tikz-cd}
\usepackage[noadjust]{cite}
\usepackage{cite}
\usepackage{ragged2e}
\usepackage[margin=1.2in]{geometry}
\graphicspath{ {./images/} }
\newtheorem{theorem}{Theorem}[section]

\newtheorem{lemma}[theorem]{Lemma}
\newtheorem{corollary}[theorem]{Corollary}
\newtheorem{mainthm}{Theorem}

\theoremstyle{definition}
\newtheorem{definition}[theorem]{Definition}

\newtheorem{remark}[theorem]{Remark}

\newtheorem{cons}[theorem]{Construction}
\newtheorem{facts}[theorem]{Facts}

\DeclareMathOperator{\neutre}{\mathbf{e}}
\DeclareMathOperator{\N}{\mathbb{N}}
\DeclareMathOperator{\Coxgraph}{\widehat{\Gamma}}
\newcommand{\Coxlabel}{\widehat{m}}

\DeclareMathOperator{\Supp}{\operatorname{Supp}}

\newcommand{\Addresses}{{
		\bigskip
		\footnotesize
		
		Mireille Soergel, \textsc{Max Planck Institute for Mathematics in the Sciences, Inselstrasse 22, 04103 Leipzig, Germany}\par\nopagebreak
		\textit{E-mail address}: \texttt{soergel@mis.mpg.de}
        \medskip
        
            Nicolas Vaskou, \textsc{School of Mathematics, University of Bristol, Bristol BS8 1UG, UK}\par\nopagebreak
		\textit{E-mail address}: \texttt{nicolas.vaskou@gmail.com}
		
}}

\title{Dyer groups: Centres, hyperbolicity, and acylindrical hyperbolicity}
\author{Mireille Soergel and Nicolas Vaskou}

\begin{document}

\maketitle

\begin{abstract}
In this article we describe the centres of all Dyer groups. We also give a complete classification of when a Dyer group $D(\Gamma)$ is hyperbolic or acylindricality hyperbolic, with conditions that can easily be read on the Dyer graph $\Gamma$.
\end{abstract}

\section{Introduction}

Coxeter groups and right-angled Artin groups are generally well-understood. One common feature of Coxeter groups and right-angled Artin groups is their solution to the word problem. It was given by Tits for Coxeter groups \cite{Tits} and by Green for graph products of cyclic groups \cite{Green}. In his study of reflection subgroups of Coxeter groups \cite{Dyer1990ReflectionSubgroups}, Dyer introduces a family of groups which contains both Coxeter groups and graph products of cyclic groups. By \cite{Dyer1990ReflectionSubgroups} and \cite{ParSoe2023WordProblem}, this family, which we call Dyer groups, has the same solution to the word problem as Coxeter groups and graph products of cyclic groups. It is therefore natural to understand which properties of Coxeter groups and right-angled Artin groups can be extended to Dyer groups. A first answer can be found in \cite{Soergel2024Complex} where geometric actions of Dyer groups on CAT(0)
spaces are constructed that extend those of Coxeter groups on Davis–Moussong complexes (\cite{moussong1988hyperbolic}) and those of right-angled Artin groups on Salvetti complexes (\cite{ChaDav}). In the present work we study the centres of Dyer groups, and we focus on other aspects on non-positive curvature such as hyperbolicity and acylindrical hyperbolicity.

Similarly to Coxeter group and right-angled Artin groups, Dyer groups are defined by specific presentations which can be encoded in a labelled graph.
Let $\Gamma$ be a finite simplicial graph with vertex set $V(\Gamma)$ and edge set $E(\Gamma)$. We suppose that $\Gamma$ comes with a labelling $f : V(\Gamma) \rightarrow \N_{\geq 2}\cup\{\infty\}$ of its vertices and a labelling $m: E(\Gamma) \rightarrow \N_{\geq 2}$ of its edges, such that for any $\{u,v\}\in E(\Gamma)$, if $f(v) \geq 3$ then $m(u,v) = 2$. The graph $\Gamma$ is called a \emph{Dyer graph}, and the associated \emph{Dyer group} is the group $D(\Gamma)$ given by
\begin{multline*}
    D(\Gamma) \coloneqq \langle V(\Gamma) \mid v^{f(v)} = \neutre \text{ for all } v \in V(\Gamma) \text{ such that } f(v) < \infty, \\ \underbrace{uvu\cdots}_{m(u,v)\text{ terms}} = \underbrace{vuv\cdots}_{m(u,v) \text{ terms}} \text{ for all } \ \{u, v\} \in E(\Gamma) \rangle.
\end{multline*}

The reader unfamiliar with Coxeter groups or right-angled Artin groups may consider the following definition. A \emph{Coxeter group} is a Dyer group whose Dyer graph $\Gamma$ satisfies $f(v) = 2$ for every $v \in V(\Gamma)$. We will usually denote it by $W(\Gamma)$. A \emph{right-angled Artin group} is a Dyer group whose Dyer graph $\Gamma$ satisfies $f(v)=\infty$ for every $v\in V(\Gamma)$.
\bigskip

It is known (\cite{Dyer1990ReflectionSubgroups}) that for every subset $T \subseteq V(\Gamma)$, the subgroup $D_T$ generated by $T$ is isomorphic to the Dyer group $D(\Gamma')$, where $\Gamma'$ is the subgraph of $\Gamma$ spanned by $T$. Consequently, we will write interchangeably $D_T$ or $D(\Gamma')$ to denote the subgroup of $D(\Gamma)$ generated by $T = V(\Gamma')$. Such subgroups are called \emph{standard parabolic subgroups} of $D(\Gamma)$. 

A Dyer graph $\Gamma$ is said to be \emph{reducible} if $\Gamma$ can be decomposed as a join of two induced subgraphs, where every edge $e$ of the join satisfies $m(e) = 2$. Note that in this case, $D(\Gamma)$ splits as the direct product $D(\Gamma_1) \times D(\Gamma_2)$ of the Dyer groups given by the two subgraphs, and the Dyer group itself is called \emph{reducible}. The Dyer graph $\Gamma$ or the Dyer group $D(\Gamma)$ are called \emph{irreducible} otherwise. It is standard that $D(\Gamma)$ can always be decomposed as a direct product $D(\Gamma) = D(\Gamma_1) \times \cdots \times D(\Gamma_n)$ of irreducible standard parabolic subgroups, and that this decomposition is unique up to permuting the factors. The subgraphs $\Gamma_i$ are called the \emph{irreducible components} of $\Gamma$.

Our first result concern the centres of Dyer groups. Note that it is enough to deal with irreducible Dyer groups as the centre of a direct product is the direct product of the centres of the factors. Recall that the centres of Coxeter groups are fully classified (see Lemma \ref{LemmaCenterCoxeterGroup} for the compilation of results).

\begin{mainthm} \label{TheoremCentres}
    Let $D(\Gamma)$ be an irreducible Dyer group, and suppose that there exists some $v \in V(\Gamma)$ with $f(v) \neq 2$ (in other words, $\Gamma$ does not just define a Coxeter group). Then either $\Gamma$ is a single vertex and $D(\Gamma)$ is cyclic, or $D(\Gamma)$ has trivial centre.
\end{mainthm}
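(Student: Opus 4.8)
The plan is to isolate the special vertex $v_0$ with $f(v_0)\neq 2$ and to exploit the label constraint: since $f(v_0)\neq 2$, every edge incident to $v_0$ carries the label $2$, so $v_0$ commutes with $L:=N(v_0)$ and has \emph{no} relation with any vertex outside $\{v_0\}\cup L$. First I would dispose of the degenerate case. If $v_0$ is adjacent to every other vertex, then $\Gamma=\{v_0\}\ast(\Gamma\smallsetminus v_0)$ is a join all of whose crossing edges have label $2$, so $\Gamma$ is reducible; irreducibility then forces $\Gamma$ to be the single vertex $\{v_0\}$, giving the cyclic alternative. Hence I may assume $\Gamma$ is irreducible with at least two vertices and that $v_0$ admits a non-neighbour $w$, so that $\mathrm{St}(v_0):=\{v_0\}\cup L\subsetneq V(\Gamma)$.

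The key structural device is a visual splitting. Because the only relations involving $v_0$ are $v_0^{f(v_0)}=\neutre$ and $[v_0,\ell]=\neutre$ for $\ell\in L$, comparing presentations yields $D(\Gamma)=G_A\ast_{D_L}G_B$ with $G_A:=D_{\mathrm{St}(v_0)}=\langle v_0\rangle\times D_L$ and $G_B:=D_{V\smallsetminus v_0}$, the two edge inclusions being the standard parabolic embeddings. Let $T$ be the associated Bass–Serre tree, with $v_A$ the vertex fixed by $G_A$ and $v_B$ the vertex fixed by $G_B$. The retraction $\rho\colon D(\Gamma)\to\langle v_0\rangle$ sending every generator but $v_0$ to $\neutre$ (well defined precisely because all edges at $v_0$ have label $2$) kills every conjugate of $D_L$ while fixing $v_0$; hence $v_0$ is not conjugate into the edge group $D_L$, so $v_0$ fixes no edge of $T$ and $\mathrm{Fix}(v_0)=\{v_A\}$. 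Consequently any element commuting with $v_0$ preserves $\{v_A\}$ and lies in $G_A$; in particular a central element $z$ satisfies $\Supp(z)\subseteq\mathrm{St}(v_0)$, so $z=v_0^{a}h$ with $h\in D_L$. A second tree argument then forces $a=0$: being central, $z$ fixes $g\cdot v_A$ for every $g$, hence fixes two distinct $A$-vertices adjacent to $v_B$ (two exist since $[G_B:D_L]\geq 2$, as $w\notin L$), hence fixes $v_B$; thus $z\in G_A\cap G_B=D_L$ and $a=0$. Therefore $z\in D_L$ and $\Supp(z)\subseteq L$.

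To conclude I would argue by induction on $|V(\Gamma)|$. Writing $H:=D(\Gamma\smallsetminus v_0)$, the computation shows $z\in D_L\subseteq H$; since $z$ is central in $D(\Gamma)\supseteq H$ it commutes with all of $H$, so $z\in Z(H)$, while $\Supp(z)\subseteq L$. Decomposing $\Gamma\smallsetminus v_0$ into its irreducible components $\Lambda_1,\dots,\Lambda_k$ gives $Z(H)=\prod_i Z(D(\Lambda_i))$, and $\Supp(z)\subseteq L$ constrains each projection $z_i$ to satisfy $\Supp(z_i)\subseteq L\cap\Lambda_i$. Suppose some $z_i\neq\neutre$. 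Then $\Lambda_i$ cannot be an irreducible component with a non-order-$2$ vertex and at least two vertices, for those the inductive hypothesis gives trivial centre; hence $\Lambda_i$ is either a single vertex or a Coxeter component, and in either case any nontrivial central element has \emph{full} support (for Coxeter components by Lemma \ref{LemmaCenterCoxeterGroup}, the centre being generated by the longest element, which involves every generator). Thus $\Lambda_i=\Supp(z_i)\subseteq L=N(v_0)$, so $v_0$ is joined to all of $\Lambda_i$ by label-$2$ edges, while $\Lambda_i$ is already joined to the remaining components by label-$2$ edges; hence $\Gamma=\Lambda_i\ast(\Gamma\smallsetminus\Lambda_i)$ is a reducible join, contradicting irreducibility. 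Therefore every $z_i=\neutre$ and $z=\neutre$.

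The main obstacle is justifying the parabolic machinery underlying the two middle steps: that standard parabolics of $D(\Gamma)$ embed and intersect correctly (so that $\Supp$ is well defined and the edge inclusions of the amalgam are faithful), and that the visual splitting over $D_{N(v_0)}$ is genuine. For Dyer groups these facts are not automatic because of the Coxeter part, where commuting relations are replaced by braid relations of odd or high label; they are available through the solution of the word problem and the parabolic theory of \cite{Dyer1990ReflectionSubgroups,ParSoe2023WordProblem}. What makes the argument run is that the label constraint confines all such braid relations to the subgraph $\Gamma\smallsetminus v_0$, so the genuinely Coxeter-theoretic phenomena are quarantined inside $H$ and absorbed by the known classification of centres of Coxeter groups.
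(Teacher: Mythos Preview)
Your proof is correct, but it follows a genuinely different route from the paper's. Both arguments begin with the same observation: the distinguished vertex $v_0$ with $f(v_0)\neq 2$ has only label-$2$ edges, so by irreducibility it admits a non-neighbour. From there the paper applies Serre's lemma $Z(A\ast_C B)=Z(A)\cap Z(B)\subseteq C$ (Lemma~\ref{lem: centres of amalgamated products}) simultaneously to \emph{every} pair of non-adjacent vertices, which forces $Z(D(\Gamma))\leqslant D_C$ where $C=\{v:\mathrm{Star}(v)=\Gamma\}$; irreducibility gives $C\subseteq V_2$, so $D_C$ is Coxeter, and a one-step enlargement $C^+\supsetneq C$ (adjoining to each irreducible component of $C$ a neighbour across a high-label edge) produces a second Coxeter parabolic whose irreducible components strictly overlap those of $C$, so that two applications of Corollary~\ref{CorollaryCenterCoxeterGroupSupport} yield a support contradiction---no induction required. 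You instead use a \emph{single} visual splitting $D(\Gamma)=D_{\mathrm{St}(v_0)}\ast_{D_{N(v_0)}}D_{V\smallsetminus v_0}$, the retraction onto $\langle v_0\rangle$, and a Bass--Serre fixed-point argument to reach $Z\subseteq D_{N(v_0)}$ (this is effectively a hands-on proof of Serre's lemma for this one amalgam, and could be shortened by citing it), and then finish by induction on $|V(\Gamma)|$, feeding the Coxeter components of $\Gamma\smallsetminus v_0$ into Corollary~\ref{CorollaryCenterCoxeterGroupSupport}. Your approach is more geometric and modular; the paper's is a direct combinatorial reduction that trades the induction for the $C^+$ trick. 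Both rest on the same Coxeter-theoretic input.
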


The notion of hyperbolic group was defined in the seminal work of Gromov \cite{gromov1987hyperbolic}, and has since then become a central notion in geometric group theory.

In the following, we give an easily checkable criterion regarding when a given Dyer group is Gromov-hyperbolic:

\begin{mainthm} \label{TheoremHyperbolicity}
    Let $D(\Gamma) \cong D(\Gamma_1) \times \cdots \times D(\Gamma_n)$ be a Dyer group decomposed as a product of its irreducible components. Then the following are equivalent:
    \begin{enumerate}
        \item $D(\Gamma)$ is hyperbolic ;
        \item At most one of the $D(\Gamma_i)$'s is infinite, and there is no standard parabolic subgroup $D_T$ of this $D(\Gamma_i)$ that is an affine Coxeter group of rank $\geq 3$ or that decomposes as $D_T = D_{T_1}\times D_{T_2}$ with $T = T_1\sqcup T_2$ and both $D_{T_1}$, $D_{T_2}$ infinite.
    \end{enumerate}
\end{mainthm}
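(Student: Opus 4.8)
The plan is to exploit the geometric action of $D(\Gamma)$ on the CAT(0) Soergel complex $X$ built in \cite{Soergel2024Complex}, and to invoke the Flat Plane Theorem: since the action is proper and cocompact, $D(\Gamma)$ is hyperbolic if and only if $X$ contains no isometrically embedded flat plane $\mathbb{E}^2$. The whole equivalence then reduces to matching the existence of flat planes in $X$ against the parabolic conditions in item (2).

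First I would dispose of the direct product. If two distinct components $D(\Gamma_i)$ and $D(\Gamma_j)$ are infinite, then each, being infinite and acting geometrically on a CAT(0) space, contains an element of infinite order; as the two factors commute they span a subgroup $\mathbb{Z}^2$, which obstructs hyperbolicity. Hence at most one factor may be infinite. When exactly one factor is infinite the remaining factors are finite, so $D(\Gamma)$ is quasi-isometric to that single infinite irreducible component, and hyperbolicity of $D(\Gamma)$ becomes equivalent to hyperbolicity of that component (the case where all components are finite being trivial). This reduces the entire statement to the irreducible case.

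For an irreducible component $D(\Gamma_i)$ I would prove the two implications separately. The direction \emph{"(2) fails $\Rightarrow$ not hyperbolic"} is the routine one: an affine Coxeter standard parabolic of rank $k \geq 3$ contains $\mathbb{Z}^{k-1} \supseteq \mathbb{Z}^2$, while a reducible standard parabolic $D_T = D_{T_1} \times D_{T_2}$ with both factors infinite contains $\mathbb{Z} \times \mathbb{Z}$; since standard parabolic subgroups embed in $D(\Gamma_i)$, either configuration produces a $\mathbb{Z}^2$ subgroup and kills hyperbolicity. The converse, \emph{"not hyperbolic $\Rightarrow$ (2) fails"}, is the core of the argument: assuming $X$ carries a flat plane, I must locate one of the two forbidden parabolics. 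Here I would adapt Moussong's analysis of flats in the Davis complex, using the CAT(1) structure of the vertex links of $X$ to show that any flat plane is parallel to the subcomplex spanned by some standard parabolic $D_T$, and then that a minimal such $D_T$ supporting a flat is forced to be affine of rank $\geq 3$ or to split as a product of two infinite parabolics.

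The main obstacle is exactly this last \emph{standardisation of flats} step, and the genuinely new difficulty relative to the Coxeter setting is the presence of the non-reflection generators: vertices with $f(v) \geq 3$ contribute finite cyclic factors and vertices with $f(v) = \infty$ contribute free, RAAG-type directions, so the links in $X$ are no longer the spherical Coxeter links that Moussong treated. I expect to handle this by splitting each link into its Coxeter part and its graph-product part, controlling which flats each can support through the combinatorics of $\Gamma$ (using crucially that an edge realising reducibility must carry label $2$), and verifying that a geodesic line lying in an irreducible infinite factor cannot be completed to a flat except along a reducible or an affine parabolic. Ruling out any "exotic" flat that is aligned with no parabolic in this mixed Coxeter/RAAG geometry is where I anticipate the most delicate work.
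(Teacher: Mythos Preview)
Your proposal takes a genuinely different route from the paper. You attack the problem head-on via the Flat Plane Theorem applied to the Soergel complex and plan to redo, in the mixed Dyer setting, Moussong's classification of flats in the Davis complex. The paper bypasses all of this geometry: it uses the finite-index embedding $D(\Gamma)\hookrightarrow W(\Coxgraph)$ from \cite{Soergel2024Complex} (Theorem~\ref{ThmFiniteIndexEmbedding}) to get $D(\Gamma)$ hyperbolic $\Leftrightarrow$ $W(\Coxgraph)$ hyperbolic immediately, then invokes Moussong's criterion for $W(\Coxgraph)$ as a black box, and the only genuine work is the purely combinatorial translation of Moussong's condition on $\Coxgraph$ back into the stated condition on $\Gamma$. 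That translation is elementary bookkeeping (tracking what happens to the added vertices $v'$), with no link geometry and no flat-plane analysis at all.

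What each approach buys: the paper's route is short and reuses existing technology, but it tells you nothing about the intrinsic geometry of the Soergel complex. Your route, if carried out, would give an independent geometric proof and a direct description of the flats, which is conceptually richer.

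That said, as it stands your proposal has a real gap rather than just a different strategy. Everything up to and including ``(2) fails $\Rightarrow$ not hyperbolic'' is fine and easy. The converse, however, you reduce to the \emph{standardisation of flats} step and then defer it entirely (``I expect to handle this by\ldots'', ``ruling out any `exotic' flat\ldots is where I anticipate the most delicate work''). This step is the whole content of the theorem, and for Coxeter groups it is precisely the substance of Moussong's thesis; extending that analysis to the mixed links of the Soergel complex is not a routine adaptation, and nothing in your outline indicates how the key dichotomy (affine of rank $\geq 3$ versus reducible with two infinite factors) will emerge from the link combinatorics in the presence of $V_p$- and $V_\infty$-vertices. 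Until that step is actually written, the proposal is a plausible programme rather than a proof. If you want a complete argument without carrying out that programme, the finite-index Coxeter overgroup is the shortcut you are missing.
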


There are several notions that generalise that of being Gromov-hyperbolic. In \cite{osin2016acylindrically}, Osin introduced the notion for a group to be acylindrically hyperbolic (see Definition \ref{DefiAH}). This broader class contains many groups that are not hyperbolic, such as $Out(F_n)$ ($n \geq 2$), mapping class groups, or many Artin groups, amongst others.

We also provide an easily verifiable condition for a Dyer group to be acylindrically hyperbolic:

\begin{mainthm} \label{TheoremAH}
    Let $D(\Gamma) \cong D(\Gamma_1) \times \cdots \times D(\Gamma_n)$ be a Dyer group decomposed as a product of its irreducible components. Then the following are equivalent:
    \begin{enumerate}
        \item $D(\Gamma)$ is acylindrically hyperbolic ;
        \item Exactly one of the $D(\Gamma_i)$'s is infinite, and it is not an affine Coxeter group and not isomorphic to $\mathbb{Z}$.
    \end{enumerate}
\end{mainthm}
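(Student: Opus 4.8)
The plan is to prove both implications through the same structural reductions, relying on two standard features of acylindrical hyperbolicity recorded by Osin \cite{osin2016acylindrically}: it is a commensurability invariant (preserved under passage to finite-index sub- and supergroups), and a direct product $A \times B$ of two infinite groups is never acylindrically hyperbolic. The latter holds because in a non-elementary acylindrical action the two factors cannot both be elliptic, while an infinite factor commuting with a loxodromic element would violate acylindricity. Writing $D(\Gamma) = D(\Gamma_1) \times \cdots \times D(\Gamma_n)$, the finite components assemble into a finite direct factor $F$, so $D(\Gamma)$ is commensurable to the product $P$ of its infinite irreducible components.

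For $(1) \Rightarrow (2)$: if $D(\Gamma)$ is acylindrically hyperbolic then it is infinite and not virtually cyclic, so at least one component is infinite; were two or more infinite, $P$, hence $D(\Gamma)$, would split as a product of two infinite groups, which is impossible. Thus exactly one component $D(\Gamma_1)$ is infinite and $D(\Gamma) = D(\Gamma_1) \times F$ with $F$ finite; by commensurability invariance $D(\Gamma_1)$ is itself acylindrically hyperbolic. Finally, affine Coxeter groups are virtually abelian and $\mathbb{Z}$ is cyclic, so neither is acylindrically hyperbolic (in particular the infinite dihedral group $\widetilde{A}_1$ is excluded as an affine Coxeter group); hence $D(\Gamma_1)$ is neither, giving $(2)$.

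For $(2) \Rightarrow (1)$: again $D(\Gamma) = D(\Gamma_1) \times F$ with $F$ finite and $D(\Gamma_1)$ the unique infinite component, so by commensurability invariance it suffices to prove the core statement that an irreducible, infinite Dyer group $D(\Gamma_1)$ which is neither an affine Coxeter group nor isomorphic to $\mathbb{Z}$ is acylindrically hyperbolic. To this end I would use the geometric action of $D(\Gamma_1)$ on the CAT(0) complex $\Sigma$ built in \cite{Soergel2024Complex} and exhibit a rank-one (equivalently, contracting) isometry $g$. By Sisto's theorem such a $g$ is a generalized loxodromic element for the proper action, so, $D(\Gamma_1)$ being not virtually cyclic (the only infinite irreducible virtually cyclic Dyer groups are $\mathbb{Z}$ and $\widetilde{A}_1$, both excluded), Osin's criterion \cite{osin2016acylindrically} yields acylindrical hyperbolicity.

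The main obstacle is precisely the production of this rank-one isometry, i.e.\ a rank-rigidity dichotomy for $\Sigma$, and this is where the two hypotheses must be used essentially. I would argue that the two ways a rank-one direction can fail are both excluded: a product splitting of a cocompact convex core of $\Sigma$ would descend to a join decomposition of $\Gamma_1$ with $2$-labelled edges, contradicting irreducibility, while a complete absence of rank-one directions would force $\Sigma$ to be flat or otherwise higher-rank, forcing $D(\Gamma_1)$ to be affine, which is also excluded. Since general CAT(0) rank rigidity is unavailable, the plan is to exploit the special geometry of $\Sigma$, which interpolates between the Davis and Salvetti complexes: on the Coxeter-type directions one invokes the rank-one isometries of irreducible non-affine Coxeter groups on the Davis complex (Caprace--Fujiwara), and on the right-angled/cubical directions one uses rank rigidity for CAT(0) cube complexes (Caprace--Sageev), assembling an axis supported on a suitable standard parabolic and checking, via the geometry of $\Sigma$ and the normal forms coming from \cite{Dyer1990ReflectionSubgroups}, that irreducibility prevents it from bounding a half-flat. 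An alternative, purely combinatorial route is to fix an explicit candidate, such as a product of two non-commuting standard generators inside a non-affine rank-$2$ standard parabolic, and verify the contracting/WPD property directly from the normal forms; in either approach I expect the delicate point to be establishing that the chosen axis is genuinely contracting rather than parallel to a product or flat direction, and this is exactly where irreducibility and non-affineness enter decisively.
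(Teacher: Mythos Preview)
Your argument for $(1)\Rightarrow(2)$ is correct and essentially identical to the paper's. The gap is in $(2)\Rightarrow(1)$: you acknowledge that the crux is producing a rank-one isometry for the action of $D(\Gamma_1)$ on the CAT(0) complex $\Sigma$ of \cite{Soergel2024Complex}, and then outline a programme (hybrid rank rigidity, explicit WPD candidates) rather than a proof. None of the pieces you invoke apply directly: $\Sigma$ is neither a Davis complex nor a cube complex in general, so neither Caprace--Fujiwara nor Caprace--Sageev is available off the shelf, and your proposed ``assembling an axis on a suitable standard parabolic and checking it is not parallel to a flat'' is precisely the hard step you have not carried out. As written this is a plausible strategy, not a proof.

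The paper bypasses this difficulty entirely, and the trick is worth internalising. By Theorem~\ref{ThmFiniteIndexEmbedding}, $D(\Gamma)$ sits with finite index inside the Coxeter group $W(\Coxgraph)$ obtained from Construction~\ref{ConstructionCoxeterGraph}, and by Lemma~\ref{LemmaIrreducibilityPassesToW} the irreducible components correspond. The paper then proves a short lemma (Lemma~\ref{LemmaAffineIIFAffine}) showing that $W(\Coxgraph_i)$ is affine if and only if $D(\Gamma_i)$ is an affine Coxeter group or isomorphic to $\mathbb{Z}$. Thus condition $(2)$ translates exactly into the statement that $W(\Coxgraph)$ has a unique infinite irreducible component which is not affine. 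At this point one is in the Coxeter world, where the rank-one isometry on the Davis complex is already provided by \cite{caprace2010rank}; Sisto then gives acylindrical hyperbolicity of $W(\Coxgraph)$, and commensurability invariance brings it back down to $D(\Gamma)$. In short, the paper uses the very commensurability invariance you invoked, but upwards to a Coxeter supergroup rather than sideways to the infinite component, so that no new rank-rigidity statement is needed.
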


These results were known for Coxeter groups and right-angled Artin groups. In Theorem \ref{ThmAH2}, we additionally give a classification of acylindrically hyperbolic Coxeter groups, which to our knowledge has not been written down in this form before, although the characterisation essentially follows from the work of \cite{caprace2010rank}.

\paragraph{Acknowledgements.} We thank the Centre de Recherches Math\'ematiques for its hospitality during the thematic semester on Geometric Group Theory and for providing a warm environnement for this project to begin. The second author is supported by the Postdoc Mobility $\sharp$P500PT$\_$210985 of the Swiss National Science Foundation.

\section{Centres of Dyer groups}

This section is devoted to the proof of Theorem \ref{TheoremCentres}. We start by introducing the necessary technical background.

\begin{definition}
    Let $S(V) = \{v^{\alpha}\mid v\in V, \alpha\in \mathbb{Z}_{f(v)}\setminus \{0\}\}$, where $\mathbb{Z}_{f(v)} = \mathbb{Z}/f(v)\mathbb{Z}$ if $f(v) <\infty$ and $\mathbb{Z}_{\infty} =\mathbb{Z}$. A syllabic word $w$ is an element in the free monoid $S(V)^* $. It is usually written as a finite sequence. For a syllabic word $w = (s_1,s_2,\dots, s_l)\in S(V)^*$, we set $\overline{w} = s_1s_2\cdots s_l\in D(\Gamma)$ and say that $\overline{w}$ is represented by $w$. Moreover $w$ is reduced if $l = \mathrm{lg}_{S(X)}(\overline{w})$.
    The support of a syllabic word $w = (v_1^{\alpha_1}, v_2^{\alpha_2}, \dots, v_l^{\alpha_l})$ is $\mathrm{Supp}(w) = \{v_1, v_2, \dots, v_l\}$. For $g\in D$, choose a reduced syllabic word $w = (v_1^{\alpha_1}, v_2^{\alpha_2}, \dots, v_l^{\alpha_l})$ representing $g$. We define $\mathrm{Supp}(g)= \mathrm{Supp}(w) = \{v_1, v_2, \dots, v_l\}$.
\end{definition}

\begin{remark}
    It was shown in \cite[Theorem 2.2]{ParSoe2023WordProblem} that the definition of $\Supp(g)$ does not depend on the choice of reduced word $w$ representing $g$.
\end{remark}

The following result is standard and describes the centres of Coxeter groups. It can essentially be found by putting together work from \cite{coxeter1935complete}, \cite{Bourbaki1968Groupes} and \cite{davis2008geometrytopologycoxeter}. It was rephrased in a unified way in \cite[Lemma 2.1]{Michael2024involutionscoxetergroups}:

\begin{lemma}\label{LemmaCenterCoxeterGroup}
    Let $W(\Gamma)$ be an irreducible Coxeter group. If $W(\Gamma)$ is of type $A_1$, $B_n$ ($n \geq 2$), $D_{2n}$ ($n \geq 2$), $E_7$, $E_8$, $G_2$, $F_4$, $H_3$, $H_4$ or $I_2(2m)$ ($m \geq 2$), then its centre is the subgroup $\langle w_0 \rangle$ of order $2$, where $w_0$ is the so-called longest element of $W(\Gamma)$. In all other cases the center of $W(\Gamma)$ is trivial.
\end{lemma}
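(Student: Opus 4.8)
The plan is to run the whole argument through the faithful geometric (Tits) representation $\rho\colon W(\Gamma)\to \mathrm{GL}(V)$, where $V$ has basis the simple roots $\{\alpha_s : s\in S\}$ and carries the symmetric bilinear form $B$ normalised by $B(\alpha_s,\alpha_s)=1$ and $B(\alpha_s,\alpha_t)=-\cos(\pi/m_{st})$. The first step is to pin down how a central element can act. If $z\in Z(W(\Gamma))$ then $z$ commutes with every generator $s$, so $\rho(z)$ conjugates the reflection $\rho(s)$ to itself; since a reflection is determined by its root line, this forces $\rho(z)\alpha_s\in\mathbb{R}\alpha_s$, and as $\rho(z)$ preserves $B$ we obtain $\rho(z)\alpha_s=\varepsilon_s\alpha_s$ with $\varepsilon_s\in\{\pm 1\}$ for each $s$.

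The second step is to propagate these signs using irreducibility. Whenever $m_{st}\geq 3$ (or $m_{st}=\infty$) we have $B(\alpha_s,\alpha_t)\neq 0$, so evaluating $B(\rho(z)\alpha_s,\rho(z)\alpha_t)=B(\alpha_s,\alpha_t)$ yields $\varepsilon_s\varepsilon_t=1$, i.e. $\varepsilon_s=\varepsilon_t$. Because $\Gamma$ is irreducible the graph recording these nontrivial links is connected, so all the $\varepsilon_s$ coincide, and hence $\rho(z)=\mathrm{id}$ or $\rho(z)=-\mathrm{id}$. By faithfulness the first case gives $z=1$. Thus $Z(W(\Gamma))$ is nontrivial if and only if $-\mathrm{id}\in\rho(W(\Gamma))$, and in that case $Z(W(\Gamma))=\{1,z\}$ has order $2$.

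The third step is to decide when $-\mathrm{id}$ is actually realised. An element acting as $-\mathrm{id}$ must send the fundamental chamber to its antipode, and the antipode meets the Tits cone only when that cone is all of the ambient space, i.e. only when $W(\Gamma)$ is finite; this at once delivers triviality of the centre for every \emph{infinite} irreducible $W(\Gamma)$. In the finite case the unique element sending $\overline{C}$ to $-\overline{C}$ is the longest element $w_0$, so $-\mathrm{id}\in\rho(W(\Gamma))$ is equivalent to $w_0=-\mathrm{id}$; this identifies the central involution as $w_0$ whenever it exists.

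The final and most laborious step is to determine, type by type over the classification of finite irreducible Coxeter groups, for which diagrams $w_0=-\mathrm{id}$. Equivalently one studies the \emph{opposition involution} $\sigma$ defined by $w_0(\alpha_s)=-\alpha_{\sigma(s)}$ and checks when $\sigma=\mathrm{id}$: the involution $\sigma$ is the nontrivial diagram symmetry precisely in types $A_n$ ($n\geq 2$), $D_{2n+1}$, $E_6$ and $I_2(m)$ with $m$ odd, and is trivial in exactly the types $A_1$, $B_n$, $D_{2n}$, $E_7$, $E_8$, $F_4$, $G_2$, $H_3$, $H_4$ and $I_2(2m)$. I expect this case analysis to be the main obstacle, as it is the only genuinely computational ingredient; it is nonetheless entirely classical and can be read directly off the known descriptions of the longest elements.
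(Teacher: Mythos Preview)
Your argument is correct and is in fact the classical route to this result via the faithful Tits representation; each of the four steps works as stated, and the case analysis of the opposition involution is accurate.

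However, there is nothing to compare against: the paper does \emph{not} prove this lemma. It records it as a standard fact, attributing it to the combined work of Coxeter, Bourbaki and Davis (with a unified restatement in \cite{Michael2024involutionscoxetergroups}), and then only uses its consequence (Corollary~\ref{CorollaryCenterCoxeterGroupSupport}). So your proposal supplies a self-contained proof where the paper simply cites the literature. One small remark: your third step, phrased via the Tits cone, is correct but can be streamlined by observing that an element acting as $-\mathrm{id}$ sends every positive root to a negative one, forcing its length to equal $|\Phi^+|$ and hence $|\Phi^+|<\infty$; this avoids any discussion of chambers and convexity.
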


The following is a direct consequence:

\begin{corollary} \label{CorollaryCenterCoxeterGroupSupport}
    Let $W(\Gamma)$ be an irreducible Coxeter group. If $z\in W(\Gamma)$ is central then either $z = \neutre$ or $\Supp(z) = V(\Gamma)$.
\end{corollary}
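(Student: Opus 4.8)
The plan is to deduce this directly from Lemma~\ref{LemmaCenterCoxeterGroup}, which already pins down the centre completely, and then to reduce everything to a single support computation. First I would let $z \in W(\Gamma)$ be central and assume $z \neq \neutre$; the goal is then to show $\Supp(z) = V(\Gamma)$. Since $z$ is a nontrivial central element, the centre of $W(\Gamma)$ is nontrivial, so by Lemma~\ref{LemmaCenterCoxeterGroup} the group $W(\Gamma)$ is one of the finite types listed there and its centre is exactly $\langle w_0\rangle$, a group of order $2$ generated by the longest element $w_0$. As $z$ is the unique nontrivial element of this group, we get $z = w_0$, and the whole statement reduces to the single claim that the longest element of an irreducible (in fact, any) finite Coxeter group has full support.

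To establish $\Supp(w_0) = V(\Gamma)$ I would argue one generator at a time, using the word length $\ell$ on $W(\Gamma)$ together with the defining property of $w_0$. Fix any $s \in V(\Gamma)$. By definition $w_0$ is the unique element of maximal length, so $\ell(w_0 s) < \ell(w_0)$. The standard exchange/descent property of Coxeter groups then guarantees that the inequality $\ell(w_0 s) < \ell(w_0)$ forces $w_0$ to admit a reduced expression ending in the letter $s$; consequently $s \in \Supp(w_0)$. Since $s$ was arbitrary this gives $\Supp(w_0) = V(\Gamma)$. Here I would invoke the fact that $\Supp$ is independent of the chosen reduced word (the remark following the definition of $\Supp$), so exhibiting one reduced word containing $s$ suffices.

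The only real content beyond quoting Lemma~\ref{LemmaCenterCoxeterGroup} is this support statement for $w_0$, so that is where I expect the main (though still routine) work to lie. I do not anticipate a genuine obstacle: the length argument above is clean. If one prefers a more self-contained, root-theoretic justification, one can instead observe that if $s \notin \Supp(w_0)$ then $w_0$ lies in the standard parabolic subgroup $W_{V(\Gamma)\setminus\{s\}}$, and any element of that parabolic leaves the $\alpha_s$-coefficient of a root unchanged, hence sends the positive root $\alpha_s$ to a positive root—contradicting the fact that $w_0$ maps every positive root to a negative one. Either route closes the argument.
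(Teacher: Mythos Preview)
Your proposal is correct and follows essentially the same approach as the paper, which simply states the corollary as a direct consequence of Lemma~\ref{LemmaCenterCoxeterGroup} without further justification. You have supplied the (standard) missing step that $\Supp(w_0)=V(\Gamma)$, which is exactly the content implicit in the paper's phrase ``direct consequence''.
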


Before proving Theorem A, we recall the following lemma on centres of amalgamated products:

\begin{lemma} \cite{serre1977arbres} \label{lem: centres of amalgamated products}
    Let $G = A\ast_C B$ be the free product with amalgamation of two groups $A$ and $B$ along a common subgroup $C$. Then $Z(G)=Z(A)\cap Z(B) \leqslant C.$ 
\end{lemma}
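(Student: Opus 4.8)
The plan is to extract the centre directly from the action of $G$ on its Bass--Serre tree $T$. Recall that $T$ has vertex set $G/A \sqcup G/B$ and edge set $G/C$, that the base vertices $v_A \coloneqq A\in G/A$ and $v_B\coloneqq B\in G/B$ satisfy $\operatorname{Stab}(v_A)=A$ and $\operatorname{Stab}(v_B)=B$, and that they are joined by an edge $e$ with $\operatorname{Stab}(e)=C=A\cap B$. We assume, as is implicit in the statement, that the amalgam is non-degenerate, i.e. $C\neq A$ and $C\neq B$; otherwise $G$ is just $A$ or $B$ and there is no genuine splitting. The point is that a central element must act on $T$ in a very constrained way: since $zgz^{-1}=g$ for all $g\in G$, conjugation by $z$ sends $\operatorname{Fix}(g)$ to $\operatorname{Fix}(zgz^{-1})=\operatorname{Fix}(g)$, so $z$ preserves $\operatorname{Fix}(A)=\bigcap_{g\in A}\operatorname{Fix}(g)$ and likewise $\operatorname{Fix}(B)$.

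The key step, which I expect to be the main obstacle, is to show that $A$ fixes a \emph{unique} vertex, namely $\operatorname{Fix}(A)=\{v_A\}$. Since fixed-point sets of group actions on trees are subtrees, if $A$ fixed a vertex other than $v_A$ it would fix the entire geodesic from $v_A$ to that vertex, and in particular the neighbour of $v_A$ along it. Every neighbour of $v_A$ has the form $a\cdot v_B$ for some $a\in A$, with stabiliser $aBa^{-1}$; if $A\leqslant aBa^{-1}$ then, conjugating by $a\in A$ (which normalises $A$), we get $A=a^{-1}Aa\leqslant B$, whence $A\leqslant A\cap B=C$, contradicting $C\neq A$. Thus $\operatorname{Fix}(A)=\{v_A\}$, and symmetrically $\operatorname{Fix}(B)=\{v_B\}$. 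Consequently any $z\in Z(G)$ fixes both $v_A$ and $v_B$, so $z\in\operatorname{Stab}(v_A)\cap\operatorname{Stab}(v_B)=A\cap B=C$. This already gives $Z(G)\leqslant C$.

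It remains to identify $Z(G)$ with $Z(A)\cap Z(B)$. For the inclusion $Z(G)\subseteq Z(A)\cap Z(B)$, take $z\in Z(G)$; by the above $z\in C\leqslant A$, and being central it commutes with every element of $A$, so $z\in Z(A)$, and symmetrically $z\in Z(B)$. Conversely, if $z\in Z(A)\cap Z(B)$ then $z\in A\cap B=C$ and $z$ commutes with every element of $A$ and of $B$; since $A\cup B$ generates $G$, $z$ commutes with all of $G$, i.e. $z\in Z(G)$. Combining the two inclusions with the containment $Z(G)\leqslant C$ proved above yields $Z(G)=Z(A)\cap Z(B)\leqslant C$, as claimed. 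An alternative, tree-free route runs through the normal form theorem for amalgams, showing directly that no element of syllable length $\geqslant 1$ can be central; I find the tree argument cleaner, since it packages that bookkeeping into the single statement $\operatorname{Fix}(A)=\{v_A\}$.
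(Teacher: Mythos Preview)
Your argument is correct. The paper does not actually supply a proof of this lemma: it is quoted verbatim from Serre with only the citation \cite{serre1977arbres}, so there is nothing in the paper to compare against line by line. Your Bass--Serre tree argument is precisely the kind of proof one extracts from Serre's book, and every step checks out: the identification of $\operatorname{Fix}(A)$ with the single vertex $v_A$ via the convexity of fixed-point sets and the stabiliser computation for neighbours of $v_A$ is clean, and the passage from $z\in A\cap B=C$ to $Z(G)=Z(A)\cap Z(B)$ is routine. The explicit non-degeneracy assumption $C\neq A$, $C\neq B$ is appropriate and handled correctly.
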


\begin{remark} 
Note that a Dyer group $D(\Gamma)$ is (isomorphic to) a Coxeter group if and only if $f\equiv 2$ and it is a right-angled Artin group if and only if $f\equiv\infty$. One direction follows directly from the definition of Coxeter groups and right-angled Artin groups. The other direction follows from the abelianisation of the corresponding groups. Indeed the abelianisation of a Coxeter group (resp. a right-angled Artin group) is $(\mathbb{Z}_2)^k$ (resp. $\mathbb{Z}^k$) for some $k\in \mathbb{N}$ and the abelianisation of a Dyer group is $\prod_{p\in im(f)}\mathbb{Z}_p^{k_p}$, where $1\leq k_p\leq |\{v\in V(\Gamma)\mid f(v) = p\}|$. 
\end{remark}

\begin{theorem}[Theorem A]
    Let $D(\Gamma)$ be an irreducible Dyer group, and suppose that there exists some $v \in V(\Gamma)$ with $f(v) \neq 2$ (in other words, $\Gamma$ does not just define a Coxeter group). Then either $\Gamma$ is a single vertex and $D(\Gamma)$ is cyclic, or $D(\Gamma)$ has trivial centre.
\end{theorem}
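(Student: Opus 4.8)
The plan is to combine two facts about a putative nontrivial central element $z \in Z(D(\Gamma))$: first, that $z$ must have full support $\Supp(z) = V(\Gamma)$ (this uses only irreducibility), and second, that the special vertex $a$ with $f(a)\neq 2$ cannot lie in $\Supp(z)$ (this uses an amalgam decomposition and Serre's Lemma \ref{lem: centres of amalgamated products}). These two facts contradict each other unless $z = \neutre$, which forces $Z(D(\Gamma))$ to be trivial. Throughout I may assume $\Gamma$ has at least two vertices, the single-vertex (cyclic) case being the stated exception.

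First I would prove a ``full support'' statement generalising Corollary \ref{CorollaryCenterCoxeterGroupSupport}: \emph{if $D(\Gamma)$ is irreducible and $z\in Z(D(\Gamma))$ with $z\neq\neutre$, then $\Supp(z)=V(\Gamma)$}. Suppose $T:=\Supp(z)\subsetneq V(\Gamma)$. For any $v\in V(\Gamma)\setminus T$ we have $vz=zv$, and since $v\notin\Supp(z)$ the normal form theory underlying the solution to the word problem in \cite{ParSoe2023WordProblem} should force $v$ to commute with every generator of $\Supp(z)$; that is, $v$ is joined to each $t\in T$ by an edge with $m(v,t)=2$. But then $\Gamma=\Gamma[V(\Gamma)\setminus T]\ast\Gamma[T]$ is a join in which every crossing edge carries label $2$ (both $T$ and its complement being nonempty), so $D(\Gamma)$ is reducible, a contradiction.

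Next I would establish that $a\notin\Supp(z)$. Since $f(a)\neq 2$, every edge incident to $a$ carries label $2$ by the defining condition on Dyer graphs, so $D_{\mathrm{st}(a)}=\langle a\rangle\times D_{\mathrm{lk}(a)}$. If $a$ were adjacent to every other vertex, then $\Gamma=\{a\}\ast\Gamma[V(\Gamma)\setminus\{a\}]$ would be a join with all crossing edges labelled $2$, contradicting irreducibility (as $|V(\Gamma)|\geq 2$); hence $a$ has a non-neighbour. Consequently $\mathrm{lk}(a)$ separates $a$ from $V(\Gamma)\setminus\mathrm{st}(a)$, and reading off the presentation (every defining relation is supported on a single vertex or a single edge, and every edge lies entirely within $\mathrm{st}(a)$ or within $V(\Gamma)\setminus\{a\}$) one obtains the amalgamated decomposition $D(\Gamma)=D_{\mathrm{st}(a)}\ast_{D_{\mathrm{lk}(a)}}D_{V(\Gamma)\setminus\{a\}}$ over the embedded standard parabolic $D_{\mathrm{lk}(a)}$. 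Lemma \ref{lem: centres of amalgamated products} then yields $Z(D(\Gamma))\leq D_{\mathrm{lk}(a)}$, so $\Supp(z)\subseteq\mathrm{lk}(a)$ and in particular $a\notin\Supp(z)$.

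Combining the two steps, a nontrivial central $z$ would satisfy both $a\in\Supp(z)=V(\Gamma)$ and $a\notin\Supp(z)$, which is absurd; hence $Z(D(\Gamma))=\{\neutre\}$. I expect the main obstacle to be the first step: justifying that $v\notin\Supp(z)$ with $vz=zv$ forces $v$ to commute with all of $\Supp(z)$. For Coxeter groups this is packaged in Corollary \ref{CorollaryCenterCoxeterGroupSupport}, but in the Dyer setting it must be extracted from the reduced-word machinery, with attention to the torsion syllables $v^\alpha$; verifying that the visual amalgam decomposition is valid for Dyer groups (embedding of $D_{\mathrm{lk}(a)}$ and the corresponding splitting of the presentation) is the secondary point needing care, though it follows routinely from the locality of the defining relations.
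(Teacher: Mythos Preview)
Your strategy differs from the paper's. The paper never proves a full-support lemma for Dyer groups; instead it applies the amalgam decomposition to \emph{every} pair of non-adjacent vertices, forcing $Z(D(\Gamma))\le D_C$ where $C=\{v\in V(\Gamma):\mathrm{Star}(v)=\Gamma\}$. Irreducibility gives $f\equiv 2$ on $C$, so $D_C$ is a Coxeter group, and the argument finishes by comparing $D_C$ with a slightly larger Coxeter parabolic $D_{C^+}$ (adjoining, for each irreducible component $C_i$ of $\Gamma_C$, one vertex $u_i\notin C$ with $m(v_i,u_i)\ge 3$) and applying Corollary~\ref{CorollaryCenterCoxeterGroupSupport} to both: the support of a nontrivial $z$ would have to be a union of irreducible components of $\Gamma_C$ and simultaneously of $\Gamma_{C^+}$, which is impossible since each component of $\Gamma_{C^+}$ contains some $u_i\notin C$. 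Thus the paper only ever invokes the full-support statement inside Coxeter parabolic subgroups, where it is already available via the classification.

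Your Step~1 is the real issue, and you are right to flag it. The implicit idea---that the lone syllable $v$ in a reduced word for $vz$ can only migrate rightward via commutation moves---is not justified by the word-problem machinery alone: when $m(v,t)=3$ a subword $tvt$ may be replaced by $vtv$, creating a second occurrence of $v$, and there is no parity obstruction (the map $v\mapsto 1$, $t\mapsto 0$ to $\mathbb{Z}/2$ fails to be a homomorphism when some $m(v,t)$ is odd). For Coxeter groups the desired statement does hold, but via Kilmoyer's double-coset theorem rather than word combinatorics: if $s\notin T$ then $s$ is the minimal-length $(W_T,W_T)$-representative, whence $W_T\cap sW_Ts=W_{\{t\in T:\,st=ts\}}$, and $z=szs$ forces $\Supp(z)\subseteq\{t\in T:m(s,t)=2\}$. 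You would need an analogue for Dyer groups---plausibly obtainable through the finite-index embedding $D(\Gamma)\hookrightarrow W(\Coxgraph)$ of Theorem~\ref{ThmFiniteIndexEmbedding}, though supports do not transfer naively under that embedding. If this is completed, your route is cleaner than the paper's and has the bonus of avoiding the classification of finite Coxeter centres; as written, however, Step~1 is where the content lies and it remains unproved.
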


\begin{proof}[Proof of Theorem \ref{TheoremCentres}]
Let $V = V(\Gamma)$. By hypothesis, there is a vertex $v \in V$ such that $f(v) \neq 2$. Note that $m(v, u) = 2$ for every $u \in V$ adjacent to $v$, so by irreducibility of $\Gamma$, there must be a vertex $u \in V$ that is not adjacent to $v$. We can thus write $D(\Gamma)$ as an amalgamated free product
\[D(\Gamma) \cong D_{V\setminus\{u\}}\ast_{D_{V\setminus\{u, v\}}} D_{V\setminus\{v\}}.\]

\medskip \noindent By Lemma \ref{lem: centres of amalgamated products}, this implies that $Z(D(\Gamma)) \leqslant D_{V\setminus\{u, v\}}$. Note that this inclusion holds for any pair of non-adjacent vertices $u, v$ in $\Gamma$. In particular, we obtain that $Z(D(\Gamma)) \leqslant D_C$ where $C= \{v\in V \mid \mathrm{Star}(v) = \Gamma\} \subsetneq V(\Gamma)$. Note that for every $v\in C$ we must have $f(v) = 2$ by irredubility of $\Gamma$. Consequently, the parabolic subgroup $D_C$ is a Coxeter group.

Let $\Gamma_C$ be the subgraph of $\Gamma$ spanned by $C$, and let $C_1, \dots, C_k \subseteq C$ be such that $\Gamma_{C_1}, \dots, \Gamma_{C_k}$ are the irreducible components of $\Gamma_C$. For every $i \in \{1, \dots, k\}$, there exists some $v_i \in C_i$ and $u_i\in V\setminus C_i$ such that $m(v_i, u_i) \neq 2$, or $\Gamma_{C_i}$ would be a proper irreducible component of $\Gamma$, contradicting irreducibility. Note that in particular, $f(u_i) = 2$ for all $i \in \{1, \dots, k\}$. As $\Gamma_{C_1}, \dots, \Gamma_{C_k}$ are irreducible components of $\Gamma_{C}$, any edge connecting two vertices of distinct $\Gamma_{C_i}, \Gamma_{C_j}$ has label $2$. In particular, it must be that $u_i \notin C$ for all $i \in \{1, \dots, k\}$ (see Figure \ref{FigureCentres}). Let now $C^+=C\sqcup\{u_1,\dots,u_k\}$, and let $\Gamma_{C^+}$ denote the subgraph of $\Gamma$ spanned by $C^+$. By the above, the parabolic subgroup $D_{C^+}$ is also a Coxeter group.

One notices that we have $Z(D(\Gamma)) \leqslant D_C \leqslant D_{C^+}$, and thus we also have $Z(D) \leqslant Z(D_C) \cap Z(D_{C^+})$. Let now $z \in Z(D(\Gamma))$, and suppose that $z \neq \neutre$. As $D_C$ and $D_{C^+}$ are both Coxeter groups and $z$ belongs to both centres, we can apply Corollary \ref{CorollaryCenterCoxeterGroupSupport} twice. On one hand, $\Supp(z)$ is (the vertex set of) a non-trivial union of irreducible components of $C$. On the other hand, $\Supp(z)$ is (the vertex set of) a non-trivial union of irreducible components of $C^+$. But each irreducible component of $C^+$ contains one of the $u_i$, none of which are contained in $C$. This gives a contradiction, which proves that $Z(D(\Gamma))$ is trivial.
\end{proof}

\begin{figure}[H]
\centering
\includegraphics[scale=1]{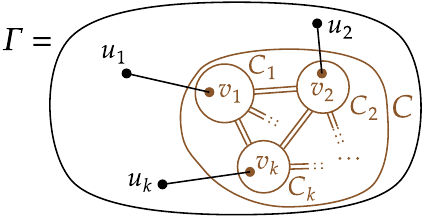}
\caption{The double edges represent joins between the components $C_i$. Each edge of the join has label $2$. All vertices on the picture have label $2$. The single black edges have label at least $3$.}

\label{FigureCentres}
\end{figure}

\section{Hyperbolicity}

In this section we prove Theorem \ref{TheoremHyperbolicity}.

\begin{definition} 
    Let $D(\Gamma)$ be a Dyer group, and let $V_2 = \{v\in V(\Gamma) \mid f(v) = 2\}$, $V_{\infty} = \{v\in V(\Gamma) \mid f(v) = \infty\}$ and $V_p = V(\Gamma) \setminus\{V_2\cup V_{\infty}\}$.
\end{definition}

\begin{cons} \label{ConstructionCoxeterGraph} Let $D(\Gamma)$ be a Dyer group. We construct a new Dyer graph $\Coxgraph = (\Coxgraph, \Coxlabel)$ that defines a Coxeter group $W(\Coxgraph)$ as follows:
\begin{itemize}
    \item Its vertex set is the disjoint union $V(\Coxgraph) \coloneqq V(\Gamma) \sqcup (V_p \cup V_{\infty})$. If $v$ is a vertex of $V_p \cup V_{\infty}$, we will denote by $v'$ the corresponding vertex in the disjoint copy $V_p \cup V_{\infty}$ ;
    \item Two vertices $u, v \in V(\Gamma)$ span an edge of $\Coxgraph$ if and only if they span an edge of $\Gamma$, and we set $\Coxlabel(u, v) \coloneqq m(u, v)$ ;
    \item For all $v \in V_p \cup V_{\infty}$ and $u \in V(\Coxgraph) \setminus \{v, v'\}$ we set $\Coxlabel(v', u) \coloneqq 2$ ;
    \item For all $v \in V_p$, there is an edge $\{v, v'\}$ in $\Coxgraph$ labelled by $\Coxlabel(v, v') \coloneqq f(v)$.
\end{itemize}
\end{cons}

\begin{figure}[H]
\centering
\includegraphics[scale=0.88]{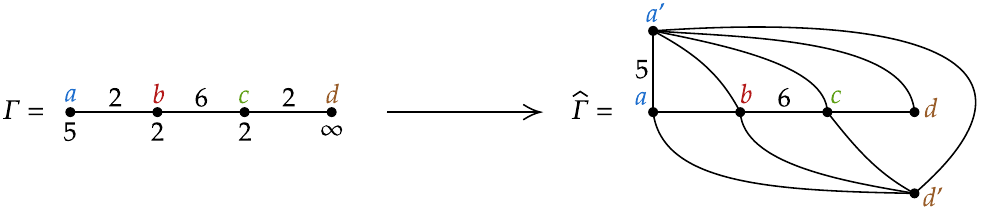}
\caption{An example of a Dyer graph $\Gamma$ and the associated Dyer graph $\Coxgraph$ corresponding to the Coxeter group $W(\Coxgraph)$. The vertices and edges whose label are not specified have label $2$.}
\label{FigureExample}
\end{figure}

The following immediately follows from Construction \ref{ConstructionCoxeterGraph}:

\begin{corollary} \label{CorollaryNumberOfNon2Edges}
    Let $D(\Gamma)$ be a Dyer group. If $v \in V_{\infty}$, then all the edges adjacent to $v$ or to $v'$ in $\Coxgraph$ have label $2$. If $v \in V_p$, then the only edge adjacent to $v$ or to $v'$ in $\Coxgraph$ that does not have label $2$ is the edge $\{v, v'\}$.
\end{corollary}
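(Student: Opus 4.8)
The plan is to read the conclusion directly off the four defining clauses of Construction \ref{ConstructionCoxeterGraph}, by sorting the edges of $\Coxgraph$ incident to $v$ or $v'$ into three types and recording the label of each. The three types are: edges between two original vertices of $V(\Gamma)$ (governed by the second clause, with label $\Coxlabel(u,w)=m(u,w)$), edges incident to a copy vertex $w'$ (governed by the third clause), and the special edges $\{w,w'\}$ for $w\in V_p$ (the fourth clause).

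First I would dispatch the copy vertex $v'$, which is the easy half in both cases. By the third clause, every edge joining $v'$ to a vertex $u\in V(\Coxgraph)\setminus\{v,v'\}$ has label $\Coxlabel(v',u)=2$. The only edge at $v'$ not covered by this clause is $\{v,v'\}$: it exists precisely when $v\in V_p$ (fourth clause), in which case its label is $f(v)\geq 3\neq 2$, and it does not exist when $v\in V_\infty$. Hence all edges at $v'$ have label $2$ when $v\in V_\infty$, and the unique non-$2$ edge at $v'$ is $\{v,v'\}$ when $v\in V_p$. Next I would treat the original vertex $v$. Its edges to another original vertex $w$ exist only when $\{v,w\}\in E(\Gamma)$ and then carry label $m(v,w)$; here the crucial input is the defining constraint of a Dyer graph, that $f(v)\geq 3$ forces $m(v,w)=2$ on every $\Gamma$-edge at $v$. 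Since both $v\in V_\infty$ and $v\in V_p$ give $f(v)\geq 3$, all such edges have label $2$. Its edges to a copy vertex $w'$ with $w\neq v$ fall under the third clause applied to $w'$ (as then $v\in V(\Coxgraph)\setminus\{w,w'\}$), so again have label $2$. The only remaining edge at $v$ is $\{v,v'\}$, present with label $f(v)\neq 2$ exactly when $v\in V_p$, and absent when $v\in V_\infty$.

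Combining the two halves gives both assertions: if $v\in V_\infty$ then every edge at $v$ or $v'$ has label $2$, and if $v\in V_p$ then the sole exception is $\{v,v'\}$. There is no real obstacle, as the statement is purely a bookkeeping consequence of the construction; the only points demanding a little care are that the third clause deliberately excludes $u=v$, so that the label of $\{v,v'\}$ is controlled by the fourth clause alone, and that the Dyer-graph hypothesis $f(v)\geq 3\Rightarrow m(v,w)=2$ is exactly what rules out a large label on an original edge incident to $v$.
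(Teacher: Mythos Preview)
Your proof is correct and is precisely the bookkeeping verification the paper has in mind; the paper itself gives no proof, stating only that the corollary ``immediately follows from Construction~\ref{ConstructionCoxeterGraph}'', and your case analysis of the three edge types together with the Dyer-graph constraint $f(v)\geq 3\Rightarrow m(u,v)=2$ is exactly how one unpacks that immediacy.
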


\begin{theorem}\cite[Theorem 2.8]{Soergel2024Complex} \label{ThmFiniteIndexEmbedding}
    Let $D(\Gamma)$ be a Dyer group, and consider the associated Coxeter group $W(\Coxgraph)$ from Construction \ref{ConstructionCoxeterGraph}. Then $W(\Coxgraph) \cong D(\Gamma) \rtimes (\mathbb{Z}/2\mathbb{Z})^{|V_p\cup V_{\infty}|}$. In particular, $D(\Gamma)$ has finite index in $W(\Coxgraph)$.
\end{theorem}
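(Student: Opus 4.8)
The plan is to exhibit the isomorphism concretely, by writing down homomorphisms in both directions and checking that they are mutually inverse.

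First I would pin down the semidirect product appearing on the right-hand side. For each $v\in V_p\cup V_{\infty}$, let $\sigma_v\colon D(\Gamma)\to D(\Gamma)$ be the map sending the standard generator $v$ to $v^{-1}$ and fixing every other standard generator. I claim each $\sigma_v$ is a well-defined automorphism: the only relations of $D(\Gamma)$ involving $v$ are $v^{f(v)}=\neutre$, which is preserved since $(v^{-1})^{f(v)}=\neutre$, and braid relations along edges at $v$; but the defining hypothesis on Dyer graphs forces every such edge to have label $2$, so these relations merely say that $v$ commutes with its neighbours, and commutation is preserved under $v\mapsto v^{-1}$. The $\sigma_v$ are commuting involutions, as they act on disjoint generators, so together they define an action of $A\coloneqq(\mathbb{Z}/2\mathbb{Z})^{|V_p\cup V_{\infty}|}$ in which the standard generator $e_v$ acts as $\sigma_v$, and I set $G\coloneqq D(\Gamma)\rtimes A$.

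Next I would define $\Phi\colon W(\Coxgraph)\to G$ on the Coxeter generators by $v\mapsto(v,0)$ for $v\in V_2$, by $v\mapsto(v,e_v)$ for $v\in V_p\cup V_{\infty}$, and by $v'\mapsto(\neutre,e_v)$, and verify that $\Phi$ respects the defining relations of $W(\Coxgraph)$. The involution relations and the braid relations among the vertices of $V(\Gamma)$ reduce, once the semidirect-product multiplication is carried out, to relations already holding in $D(\Gamma)$: one uses $\sigma_v(v)=v^{-1}$ to absorb squares, and the fact that $\sigma_v$ fixes every other generator to handle the commutations $m(u,v)=2$ that are forced along edges at vertices of $V_p\cup V_{\infty}$. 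The commutation relations coming from the edges at a primed vertex $v'$ likewise reduce to the statement that $\sigma_v$ fixes the relevant generators, which holds by construction (here Corollary \ref{CorollaryNumberOfNon2Edges} records exactly which edges at $v'$ are present). The genuinely computational point, and the step I expect to be the main obstacle, is the edge $\{v,v'\}$ of label $f(v)$ for $v\in V_p$: here one computes $\Phi(v)\Phi(v')=(v,0)$ and $\Phi(v')\Phi(v)=(v^{-1},0)$, and then checks that the two length-$f(v)$ braid words coincide by collapsing each to $v^{f(v)}=\neutre$, a short case analysis according to the parity of $f(v)$. For $v\in V_{\infty}$ there is no edge $\{v,v'\}$, so nothing needs to be checked there.

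Finally I would construct the inverse. Let $\iota\colon D(\Gamma)\to W(\Coxgraph)$ be the map $v\mapsto v$ for $v\in V_2$ and $v\mapsto vv'$ for $v\in V_p\cup V_{\infty}$; the same edge-label-$2$ argument used for $\sigma_v$ shows that the images satisfy the Dyer relations, so $\iota$ is a homomorphism. Since the primed vertices are pairwise-commuting involutions, $e_v\mapsto v'$ defines a homomorphism $\rho\colon A\to W(\Coxgraph)$. A direct conjugation computation shows $v'\,\iota(u)\,v'=\iota(\sigma_v(u))$ for every standard generator $u$, the key case being $v'(vv')v'=(vv')^{-1}$, so conjugation by $v'$ realises $\sigma_v$; by the universal property of the semidirect product this produces a homomorphism $\Psi\colon G\to W(\Coxgraph)$ restricting to $\iota$ and $\rho$. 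Evaluating $\Psi\circ\Phi$ and $\Phi\circ\Psi$ on generators shows that each is the identity, so $\Phi$ is an isomorphism $W(\Coxgraph)\cong D(\Gamma)\rtimes(\mathbb{Z}/2\mathbb{Z})^{|V_p\cup V_{\infty}|}$. The copy $D(\Gamma)\times\{0\}$ then has index $|A|=2^{|V_p\cup V_{\infty}|}<\infty$, giving the final claim.
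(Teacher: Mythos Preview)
Your argument is correct: the maps $\sigma_v$ are well-defined automorphisms for exactly the reason you give (every edge at a vertex of $V_p\cup V_\infty$ has label $2$), your map $\Phi$ respects all the Coxeter relations of $W(\Coxgraph)$ (the parity computation for the edge $\{v,v'\}$ with label $f(v)$ goes through as you indicate, reducing to $v^{f(v)}=\neutre$ in $D(\Gamma)$), and the inverse $\Psi$ built from $\iota$ and $\rho$ really does invert $\Phi$ on generators.

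Note, however, that the paper does not give its own proof of this statement: it is quoted verbatim as \cite[Theorem 2.8]{Soergel2024Complex} and used as a black box. So there is no in-paper argument to compare against. Your approach---explicit homomorphisms in both directions checked on the defining relations---is the natural one for a presentation-level statement of this kind, and is in spirit what one would expect the original reference to do.
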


\begin{lemma} \label{LemmaIrreducibilityPassesToW}
    Let $\Gamma$ be a Dyer graph with irreducible components $\Gamma_1, \dots, \Gamma_n$. Then the irreducible components of $\Coxgraph$ are $\Coxgraph_1, \dots, \Coxgraph_n$.
\end{lemma}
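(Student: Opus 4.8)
The plan is to prove the statement entirely at the level of graphs, using the description of reducibility in terms of commuting vertices. Throughout, write $\widehat{\Gamma_i}$ for the graph obtained by applying Construction \ref{ConstructionCoxeterGraph} to the Dyer graph $\Gamma_i$. Since each vertex of $\Gamma$ lies in exactly one component and the label $f$ depends only on the vertex, the sets $V_p$ and $V_{\infty}$ split as $V_p = \bigsqcup_i V_p(\Gamma_i)$ and $V_{\infty} = \bigsqcup_i V_{\infty}(\Gamma_i)$. Consequently $V(\Coxgraph)$ is partitioned into the blocks $W_i = V(\Gamma_i) \sqcup \{v' : v \in V_p(\Gamma_i) \cup V_{\infty}(\Gamma_i)\}$, and the first step is to check that the subgraph of $\Coxgraph$ induced on $W_i$ is precisely $\widehat{\Gamma_i}$. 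This is a direct comparison of the two instances of Construction \ref{ConstructionCoxeterGraph}: the edges among original vertices agree because both are inherited from $\Gamma$ with the same labels, the edges out of a primed vertex all carry label $2$, and the edge $\{v, v'\}$ for $v \in V_p(\Gamma_i)$ carries label $f(v)$ in both.

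Next I would show that $\Coxgraph$ is the join $\widehat{\Gamma_1} \ast \cdots \ast \widehat{\Gamma_n}$ with all join edges labelled $2$. For $i \neq j$ one checks the four types of cross pairs between $W_i$ and $W_j$: two original vertices are joined by a label-$2$ edge because $\Gamma$ is the join of the $\Gamma_i$; and as soon as one of the two vertices is primed, say $v'$, the third bullet of Construction \ref{ConstructionCoxeterGraph} forces $\Coxlabel(v', u) = 2$ for every $u \neq v, v'$, which covers the remaining three cases. Hence $\Coxgraph$ decomposes as the asserted join with all join labels equal to $2$.

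The heart of the argument, and the step I expect to be the main obstacle, is to verify that each $\widehat{\Gamma_i}$ is irreducible. Here I would argue by contradiction: suppose $\widehat{\Gamma_i}$ decomposes as a join $A \ast B$ with both parts nonempty and all cross edges labelled $2$. The key input is Corollary \ref{CorollaryNumberOfNon2Edges}, which shows that each primed vertex $v'$ fails to commute only with its base vertex $v$ (no edge when $v \in V_{\infty}$, an edge of label $f(v) \geq 3$ when $v \in V_p$), while commuting with everything else; thus in any join decomposition $v'$ must lie in the same part as $v$. Restricting $A \ast B$ to $V(\Gamma_i)$ then yields a decomposition $V(\Gamma_i) = (A \cap V(\Gamma_i)) \sqcup (B \cap V(\Gamma_i))$ in which every cross pair is joined by a label-$2$ edge of $\Gamma_i$; since $\Gamma_i$ is irreducible, one of the two parts must be empty, say $B \cap V(\Gamma_i)$. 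But then every base vertex lies in $A$, hence so does every primed vertex, forcing $B = \emptyset$ and contradicting nontriviality of the decomposition.

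Finally, combining the three steps, $\Coxgraph$ is a join of the irreducible subgraphs $\widehat{\Gamma_1}, \dots, \widehat{\Gamma_n}$ with all join edges labelled $2$. By the uniqueness of the decomposition of a Dyer graph into its irreducible components, these are exactly the irreducible components of $\Coxgraph$, as claimed.
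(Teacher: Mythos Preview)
Your proof is correct and follows essentially the same approach as the paper: both arguments show that $\Coxgraph$ decomposes as a label-$2$ join of the $\widehat{\Gamma_i}$ and that each $\widehat{\Gamma_i}$ is irreducible by observing that every primed vertex $v'$ is tied to its base vertex $v$ (they fail to commute), so irreducibility reduces to that of $\Gamma_i$. The only difference is presentational---the paper states the irreducibility step directly rather than by contradiction, and is terser in verifying the cross-edges---but the underlying logic is the same.
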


\begin{proof}
Let us consider any $i \in \{1, \dots, n\}$. On one hand, $\Gamma_i$ appears as a subgraph of $\Coxgraph_i$, and it is irreducible by hypothesis. On the other hand, for every $v \in \Gamma_i \cap (V_p \cup V_{\infty})$ we have $\Coxlabel(v, v') = f(v) \neq 2$. In particular, $v$ and $v'$ are in the same irreducible component of $\Coxgraph$. This shows that $\Coxgraph_i$ is irreducible. Now let $v \in V(\Coxgraph_i)$ be any vertex, and let $u \in V(\Coxgraph) \setminus V(\Coxgraph_i)$. Then it follows from Construction \ref{ConstructionCoxeterGraph} that $\Coxlabel(v, u) = 2$. Consequently, $\Coxgraph_i$ is an irreducible component of $\Coxgraph$.
\end{proof}

Let $W(\Gamma)$ be a Coxeter group. The \emph{Coxeter graph} associated with $W(\Gamma)$ is the graph $\Gamma_{Cox}$ whose vertex set if the same as that of $\Gamma$, but between two vertices $u, v$ we draw:
\begin{itemize}
\item No edge if $m(u, v) = 2$ ;
\item An unlabelled edge if $m(u, v) = 3$ ;
\item An edge labelled $m(u, v)$ if $m(u, v) \in \{4, 5, \dots\} \cup \{\infty\}$.
\end{itemize}

\begin{definition} \cite{Humphreys1990ReflectionGroups} \label{DefClassificationAffineCoxeter}
An irreducible Coxeter group $W(\Gamma)$ is called \emph{affine} if its Coxeter graph $\Gamma_{Cox}$ belongs to the list described on Figure \ref{FigureAffines}.

A Coxeter group is called \emph{affine} if its irreducible components are all affine or finite, and at least one is affine.

\begin{figure}[H]
\centering
\includegraphics[scale=0.8]{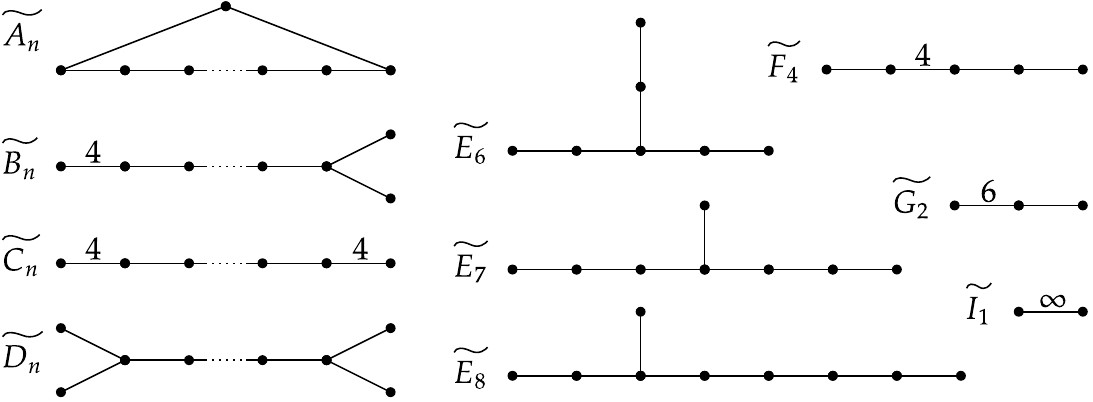}
\caption{The irreducible affine Coxeter groups. The graphs for $\widetilde{A_n}$ ($n \geq 2$), $\widetilde{B_n}$ ($n \geq 3$), $\widetilde{C_n}$ ($n \geq 2$) and $\widetilde{D_n}$ ($n \geq 4$) have $n+1$ vertices.}
\label{FigureAffines}
\end{figure}
\end{definition}

We recall that the rank of a Coxeter group $W(\Gamma)$ is $|V(\Gamma)|$.

\begin{theorem}  [Theorem B]
    Let $D(\Gamma)$ be a Dyer group, and let $\Gamma_1, \dots, \Gamma_n$ be the irreducible components of $\Gamma$. Then the following are equivalent:
    \begin{enumerate}
        \item $D(\Gamma)$ is hyperbolic ;
        \item At most one of the $D(\Gamma_i)$'s is infinite, and there is no standard parabolic subgroup $D_T$ of this $D(\Gamma_i)$ that is an affine Coxeter group of rank $\geq 3$ or that decomposes as $D_T = D_{T_1}\times D_{T_2}$ with $T = T_1\sqcup T_2$ and both $D_{T_1}$, $D_{T_2}$ infinite ;
        \item $W(\Coxgraph)$ is hyperbolic ;
        \item At most one of the $W(\Coxgraph_i)$'s is infinite, and there is no standard parabolic subgroup $W_T$ of this $W(\Coxgraph_i)$ that is an affine Coxeter group of rank $\geq 3$ or that decomposes as $W_T = W_{T_1}\times W_{T_2}$ with $T = T_1\sqcup T_2$ and both $D_{T_1}$, $D_{T_2}$ infinite.
    \end{enumerate}
\end{theorem}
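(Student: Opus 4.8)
The plan is to establish the cycle of equivalences $(1)\Leftrightarrow(3)$, $(3)\Leftrightarrow(4)$, and $(2)\Leftrightarrow(4)$, which together yield all four equivalences. The equivalence $(1)\Leftrightarrow(3)$ should be the most immediate: by Theorem \ref{ThmFiniteIndexEmbedding}, $D(\Gamma)$ sits as a finite-index subgroup of $W(\Coxgraph)$, and hyperbolicity is a quasi-isometry invariant that is inherited by and from finite-index subgroups, so $D(\Gamma)$ is hyperbolic if and only if $W(\Coxgraph)$ is.

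The equivalence $(3)\Leftrightarrow(4)$ is a purely Coxeter-theoretic statement: it characterizes hyperbolicity of a general (possibly reducible) Coxeter group $W(\Coxgraph)$ in terms of its irreducible components and their parabolic subgroups. First I would record that a direct product of two infinite groups is never hyperbolic (it contains $\mathbb{Z}^2$ as a quasi-isometrically embedded subgroup via the two infinite factors), so at most one component can be infinite. For the single infinite component, I would invoke the Moussong--Davis criterion: an irreducible Coxeter group is hyperbolic if and only if it contains no standard parabolic subgroup that is affine of rank $\geq 3$ or that splits as a product of two infinite standard parabolics. This is exactly the content of condition $(4)$, so the equivalence reduces to citing the classical characterization of hyperbolic Coxeter groups (the ``no flats'' condition of Moussong, reformulated in terms of affine and reducible parabolics). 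I would state this as the key Coxeter-group input and apply it directly.

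The equivalence $(2)\Leftrightarrow(4)$ is where the transfer between $\Gamma$ and $\Coxgraph$ happens, and this is the step I expect to be the main obstacle. By Lemma \ref{LemmaIrreducibilityPassesToW}, the irreducible components of $\Coxgraph$ are exactly $\Coxgraph_1,\dots,\Coxgraph_n$, and by Theorem \ref{ThmFiniteIndexEmbedding} applied to each component, $W(\Coxgraph_i)$ is finite if and only if $D(\Gamma_i)$ is finite; hence the ``at most one infinite'' clauses match up. The real work is to show that the forbidden-parabolic conditions correspond: I must verify that $D(\Gamma_i)$ has a standard parabolic that is affine of rank $\geq 3$ (or splits as a product of two infinite parabolics) if and only if $W(\Coxgraph_i)$ does. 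The natural strategy is to track how standard parabolic subgroups $D_T$ of $D(\Gamma_i)$ relate to standard parabolic subgroups $W_{T'}$ of $W(\Coxgraph_i)$ via Construction \ref{ConstructionCoxeterGraph}: for $T\subseteq V(\Gamma_i)$, the corresponding subgraph of $\Coxgraph$ spanned by $T$ together with the relevant primed vertices gives a $W(\widehat{\Gamma_T})$ into which $D_T$ embeds with finite index (again by Theorem \ref{ThmFiniteIndexEmbedding}). Since finite index preserves finiteness, affineness (an infinite Coxeter group is affine iff it is virtually abelian, a QI-invariant property), and the product-of-infinite-factors decomposition, the presence of a ``bad'' parabolic on one side forces one on the other.

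The delicate point I would flag is the bookkeeping in both directions of $(2)\Leftrightarrow(4)$. For the direction $(2)\Rightarrow(4)$ one starts with a bad $D_T$ and produces a bad $W_{T'}$; for $(4)\Rightarrow(2)$ one starts with a bad standard parabolic $W_{T'}$ of $W(\Coxgraph_i)$ and must descend to a bad parabolic of $D(\Gamma_i)$, which requires showing that $T'$ may be taken of the special ``closed'' form (containing $v'$ whenever it contains $v$, for $v\in V_p\cup V_\infty$) coming from Construction \ref{ConstructionCoxeterGraph}. Here I would use Corollary \ref{CorollaryNumberOfNon2Edges} to control which primed vertices can appear in a non-abelian-free or affine configuration, and argue that any bad $W_{T'}$ can be replaced by a closed one of the same type without changing finiteness, affineness, or the product structure. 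Since affine and reducible-with-infinite-factors are precisely the obstructions to the Moussong hyperbolicity criterion, matching them on the two sides completes the proof; the careful part is ensuring the rank-$\geq 3$ and infinite-factor conditions survive the passage through the finite-index extension, which I would handle by the quasi-isometry invariance of virtual abelianness and of containing a quasi-flat $\mathbb{Z}^2$.
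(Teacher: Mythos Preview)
Your overall architecture matches the paper's proof exactly: the same cycle $(1)\Leftrightarrow(3)$ via Theorem~\ref{ThmFiniteIndexEmbedding}, $(3)\Leftrightarrow(4)$ via Moussong, and $(2)\Leftrightarrow(4)$ via Lemma~\ref{LemmaIrreducibilityPassesToW}, Corollary~\ref{CorollaryNumberOfNon2Edges}, and the ``closing'' of $T$ you describe.

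One point of caution. The quasi-isometry shortcuts you invoke for $(2)\Leftrightarrow(4)$ are looser than what the paper actually does, and in one place not literally correct: ``product-of-two-infinite-factors'' is \emph{not} a QI-invariant (think $\mathbb{Z}\leq\mathbb{Z}\times\mathbb{Z}/2$), and ``$D_T$ is virtually abelian'' does not by itself yield ``$D_T$ is an affine Coxeter group of rank $\geq 3$'' in the sense of condition~(2). The paper avoids both issues combinatorially. For the product case it does the explicit closing of $T$ you anticipate and uses Lemma~\ref{LemmaIrreducibilityPassesToW} (not QI) to match the factor decompositions. For the affine case it is sharper than merely ``controlling which primed vertices can appear'': one checks by inspection of the irreducible affine diagrams of rank $\geq 3$ that \emph{no} vertex satisfies the constraints of Corollary~\ref{CorollaryNumberOfNon2Edges}, forcing $T\cap\{v,v':v\in V_p\cup V_\infty\}=\emptyset$, hence $T\subseteq V_2$ and $D_T=W_T$ literally. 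Your sketch already points at Corollary~\ref{CorollaryNumberOfNon2Edges} here, so you are close; just drop the QI phrasing and make these two combinatorial steps explicit.
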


\begin{proof}
    \noindent [(3) $\Leftrightarrow$ (4)] This is a standard result, see \cite[Theorem 17.1]{moussong1988hyperbolic}.
    \medskip

    \noindent [(1) $\Leftrightarrow$ (3)] This follows directly from Theorem \ref{ThmFiniteIndexEmbedding}.
    \medskip
    
    \noindent [(2) $\Leftrightarrow$ (4)] Combining Theorem \ref{ThmFiniteIndexEmbedding} and Lemma \ref{LemmaIrreducibilityPassesToW} shows that an irreducible component $D(\Gamma_i)$ is infinite if and only if the corresponding irreducible component $W(\Coxgraph_i)$ is infinite. So $D(\Gamma)$ has at most one infinite irreducible factor if and only if $W(\Coxgraph)$ has at most one infinite irreducible factor. All that's left to show is the equivalence on the existence of certain standard parabolic subgroups of $D(\Gamma_i)$ and of $W(\Coxgraph_i)$, and for this we may assume that $n = 1$, i.e. that $\Gamma = \Gamma_i$ and $\Coxgraph = \Coxgraph_i$.
    \medskip

    Suppose that there is a subset $T \subseteq V(\Gamma)$ such that $D_T \leqslant D(\Gamma)$ decomposes as $D_T = D_{T_1} \times D_{T_2}$, with $T = T_1 \sqcup T_2$ and both $D_{T_1}$, $D_{T_2}$ infinite. Then for $\widehat{T} \coloneqq T\cup \{v' \mid v\in T\cap (V_p\cup V_{\infty})\} \subset V(\Coxgraph)$, the group $D_T$ is a finite index subgroup of $W_{\widehat{T}}$. By Lemma \ref{LemmaIrreducibilityPassesToW} we have $W_{\widehat{T}} = W_{\widehat{T_1}}\times W_{\widehat{T_2}}$, with $\widehat{T} = \widehat{T_1}\sqcup\widehat{T_2}$ and both $W_{\widehat{T_1}}$,  $W_{\widehat{T_2}}$ infinite (note that $D_{T_1}$ and $D_{T_2}$ are products of irreducible components). 
    
    Suppose now that there is a subset $T\subseteq V(\Coxgraph)$ such that $W_T \leqslant W(\Coxgraph)$ decomposes as $W_T = W_{T_1}\times W_{T_2}$, with $T = T_1\sqcup T_2$ and both $W_{T_1}$, $W_{T_2}$ infinite. If for all $v\in V_p\cup V_{\infty}$ we have that $v\in T$ if and only if $v'\in T$, then one can write $T = \widehat{T\cap V(\Gamma)}$. Using Lemma \ref{LemmaIrreducibilityPassesToW}, we have $D_{T\cap V(\Gamma)} = D_{T_1\cap V(\Gamma)}\times D_{T_2\cap V(\Gamma)}$ where $D_{T_1\cap V(\Gamma)}$, $D_{T_2\cap V(\Gamma)}$ are infinite. Otherwise, consider
    $$\widetilde{T} \coloneqq (T\setminus \{v'\mid v'\in T \text{ and } v\notin T\} )\cup \{v'\mid v\in T\cap (V_p\cup V_{\infty}) \}.$$
    Define $\widetilde{T_1}$ and $\widetilde{T_2}$ similarly. We need to show that $W_{\widetilde{T}}$ decomposes as a direct product of $W_{\widetilde{T_1}}$ and $W_{\widetilde{T_2}}$, both infinite. Note that by Construction \ref{ConstructionCoxeterGraph}, we have
    $$W_T = W_{T\setminus \{v'\mid v'\in T \text{ and } v\notin T\}}\times W_{\{v'\mid v'\in T \text{ and } v\notin T\}},$$ where $W_{\{v'\mid v'\in T \text{ and } v\notin T\}}$ is finite. Moreover
    $W_{\{v'\mid v\in T_i\cap (V_p\cup V_{\infty}) \}}$ commutes with $W_{T_j}$ and with $W_{\{v'\mid v\in T_j\cap (V_p\cup V_{\infty}) \}}$ for $\{i, j\} = \{1, 2\}$. Hence $W_{\widetilde{T}} = W_{\widetilde{T_1}}\times W_{\widetilde{T_2}}$ where $\widetilde{T} = \widetilde{T_1}\sqcup \widetilde{T_2}$ and both $W_{\widetilde{T_1}}$, $W_{\widetilde{T_2}}$ are infinite, and for all $v\in V_p \cup V_{\infty}$ we have $v\in\widetilde{T}$ if and only if $v'\in\widetilde{T}$. So we can conlude that there is some $T\subset V(\Gamma)$ such that $D_T = D_{T_1}\times D_{T_2}$, with $T = T_1\sqcup T_2$ and both $D_{T_1}$, $D_{T_2}$ infinite if and only if there is some $T\subset V(\Coxgraph)$ such that $W_T = W_{T_1}\times W_{T_2}$, with $T = T_1\cup T_2$ and both $W_{T_1}$, $W_{T_2}$ infinite.
    \medskip

    Suppose that there is a standard parabolic subgroup $W_T \leq W(\Coxgraph)$ which is an affine Coxeter group of rank $\geq 3$. We can assume that $W_T$ is irreducible. So $\Coxgraph_T$ is one of the graphs of Figure \ref{FigureAffines}. Consider the graphs in Figure \ref{FigureAffines}. None of their vertices satisfy the conditions of Corollary \ref{CorollaryNumberOfNon2Edges}. So one can see that $T\cap \{v, v'\mid v\in V_p\cup V_{\infty}\} = \emptyset$, as none of the vertices of $T$ satisfy the conditions of Corollary \ref{CorollaryNumberOfNon2Edges}.
    So $T \subseteq V_2 \subseteq V(\Gamma)$ and so $D_T \leqslant D(\Gamma)$ is an affine Coxeter group of rank $\geq 3$.
    
    On the other hand if a standard parabolic subgroup $D_T \leqslant D(\Gamma)$ is an affine Coxeter group of rank $\geq 3$ then $T \subseteq V_2$. By construction $T \subseteq V(\Coxgraph)$ and $W_T$ is also an affine Coxeter group of rank $\geq 3$. We conclude that there is a $T \subseteq V(\Coxgraph)$ such that $W_T$ is an affine Coxeter group of rank $\geq 3$ if and only if there is a $T \subseteq V(\Gamma)$ such $D_T$ is an affine Coxeter group of rank $\geq 3$.
\end{proof}

\section{Acylindrical hyperbolicity}

Finally, in this section we prove Theorem \ref{TheoremAH}. We start by recalling the definition of acylindrically hyperbolic groups:

\begin{definition} \label{DefiAH}
    Let $G$ be a group acting by isometries on a metric space $(X, d)$. We say that the action is \emph{acylindrical} if for every $R \geq 0$, there is some $N \geq 0$ and $L \geq 0$ such that for every $x, y \in X$, if $d(x, y) \geq L$ then
    $$\# \{g \in G \ | \ d(x, gx) \leq R, \ d(y, gy) \leq R \} \leq N.$$
    A group $G$ is said to be \emph{acylindrically hyperbolic} if it is not virtually cyclic and it admits an acylindrical action on a hyperbolic space $(X, d)$.
\end{definition}

In the following, we compile several well-known facts about acylindrically hyperbolic groups:

\begin{facts} \label{FactsAH} \cite{osin2016acylindrically}
    (1) Acylindrical hyperbolicity passes to finite index subgroups ;
    \\(2) Acylindrically hyperbolic groups do not split as direct products of infinite groups ;
    \\(3) Acylindrically hyperbolic groups always contain non-abelian free subgroups.
\end{facts}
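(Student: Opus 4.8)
The plan is to deduce all three facts from Osin's structure theory of acylindrical actions \cite{osin2016acylindrically}, using the characterisation that a group is acylindrically hyperbolic exactly when it admits a \emph{non-elementary} (equivalently, general type) acylindrical action on a hyperbolic space $X$, i.e. an acylindrical action possessing two \emph{independent} loxodromic elements (loxodromics $f$, $g$ with $\{f^{+}, f^{-}\} \cap \{g^{+}, g^{-}\} = \emptyset$ in $\partial X$). I would isolate two structural inputs from \cite{osin2016acylindrically} that do the heavy lifting: first, the restriction of an acylindrical action to any subgroup is again acylindrical (the same constants $N$, $L$ from Definition \ref{DefiAH} work); second, for an acylindrical action every loxodromic $g$ lies in a unique maximal virtually cyclic subgroup $E(g)$, with $\langle g\rangle$ of finite index in $E(g)$ and $C_{G}(g) \leqslant E(g)$.

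Facts (1) and (3) are then short. For (1), let $G$ be acylindrically hyperbolic via a general type acylindrical action on $X$ and let $H \leqslant G$ have finite index. Restriction keeps the action acylindrical; to see it stays general type, pick independent loxodromics $f$, $g \in G$ and note that suitable powers $f^{m}$, $g^{n}$ lie in $H$. Powers of a loxodromic are loxodromic with unchanged endpoints, so $f^{m}$, $g^{n}$ are independent loxodromics in $H$; thus $H$ acts non-elementarily and acylindrically, and is in particular not virtually cyclic (no virtually cyclic group admits a general type action). For (3), independent loxodromics $f$, $g$ enjoy north--south dynamics on $\partial X$ with four pairwise distinct fixed points; choosing disjoint neighbourhoods and passing to large powers, the ping-pong lemma yields that $\langle f, g\rangle$ is free of rank $2$.

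Fact (2) carries the real content. Suppose $G = A \times B$ is acylindrically hyperbolic with both $A$, $B$ infinite, and fix a general type acylindrical action on $X$; I seek a contradiction. Restricting to $A$ and to $B$ gives acylindrical actions, so each factor is either elliptic (bounded orbits) or contains a loxodromic element. If both $A$ and $B$ were elliptic, then the orbit $A(Bx_{0})$ has diameter bounded in terms of $\operatorname{diam}(Ax_{0})$ and $\operatorname{diam}(Bx_{0})$, so $G = AB$ would be elliptic, contradicting general type. Hence, after relabelling, $A$ contains a loxodromic $a_{0}$. Since $B$ commutes with $A$, the product structure gives $C_{G}(a_{0}) = C_{A}(a_{0}) \times B$, and by the second structural input this centraliser lies in $E(a_{0})$ and is therefore virtually cyclic. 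But $C_{A}(a_{0})$ is infinite, as it contains $\langle a_{0}\rangle$, while an infinite virtually cyclic group cannot split as a direct product of two infinite groups (it is two-ended, whereas a product of two infinite groups is one-ended). This forces $B$ to be finite, the desired contradiction.

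The main obstacle is fact (2), and within it the decisive external ingredient is Osin's theorem that the maximal elementary subgroup $E(g)$ of a loxodromic element is virtually cyclic and contains $C_{G}(g)$; granting this, the rest is bookkeeping about orbit diameters together with the direct indecomposability of virtually cyclic groups. I would take particular care to justify the dichotomy ``elliptic or contains a loxodromic'' for the restricted actions --- this relies on Osin's classification of acylindrical actions, which rules out parabolic behaviour --- and to check that the bounded orbits of two commuting elliptic subgroups genuinely combine into a bounded orbit of their product.
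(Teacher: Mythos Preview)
The paper does not prove these facts; it simply records them with a citation to \cite{osin2016acylindrically}, so there is no argument in the paper to compare your proposal against. Your sketch is nonetheless a correct and standard derivation of all three items from Osin's structure theory: the passage to finite-index subgroups via powers of independent loxodromics, the ping-pong argument for free subgroups, and the centraliser-in-$E(g)$ argument ruling out infinite direct factors are exactly how one deduces these statements from \cite{osin2016acylindrically}. One minor imprecision in (3): it is $\langle f^{N}, g^{N}\rangle$ for large $N$, not $\langle f, g\rangle$ itself, that ping-pong shows to be free, though your phrase ``passing to large powers'' indicates you have this in mind.
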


We first prove the following:

\begin{lemma} \label{LemmaAffineIIFAffine}
    Let $D(\Gamma)$ an irreducible Dyer group. Then the following are equivalent:
    \begin{itemize}
        \item $D(\Gamma)$ is an affine Coxeter group of rank $\geq 3$.
        \item $W(\Coxgraph)$ is affine of rank $\geq 3$.
    \end{itemize}
    Moreover $W(\Coxgraph)$ is of type $\widetilde{I_1}$ if and only if $D(\Gamma)$ is $\mathbb{Z}$ or a Coxeter group of type $\widetilde{I_1}$.
\end{lemma}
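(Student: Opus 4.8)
The plan is to push everything through Construction \ref{ConstructionCoxeterGraph} and to read it off in the Coxeter diagram $\Coxgraph_{Cox}$ of $W(\Coxgraph)$, where the irreducibility hypothesis becomes connectedness. Since $D(\Gamma)$ is irreducible, Lemma \ref{LemmaIrreducibilityPassesToW} shows $\Coxgraph$ is irreducible too, so $W(\Coxgraph)$ is an irreducible Coxeter group and $\Coxgraph_{Cox}$ is connected. For the forward direction of the first equivalence, if $D(\Gamma)$ is an affine Coxeter group then in particular it is isomorphic to a Coxeter group, so by the abelianisation argument recorded in the remark above we have $f \equiv 2$; then $V_p \cup V_\infty = \emptyset$ and the construction returns $\Coxgraph = \Gamma$, whence $W(\Coxgraph) = W(\Gamma) = D(\Gamma)$ is affine of the same rank.

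For the converse I would argue by contradiction. Suppose $W(\Coxgraph)$ is affine of rank $\geq 3$ but there is some $v \in V_p \cup V_\infty$. By Corollary \ref{CorollaryNumberOfNon2Edges}, every edge of $\Coxgraph$ meeting $v$ or $v'$ carries the label $2$, with the single exception of the edge $\{v, v'\}$, which is present and labelled $f(v) \geq 3$ when $v \in V_p$ and absent (hence $m(v, v') = \infty$) when $v \in V_\infty$. Passing to the Coxeter diagram, all the label-$2$ edges vanish, while $v$ and $v'$ become joined by an edge labelled $f(v)$ (resp. by an $\infty$-edge). In either case $\{v, v'\}$ is a connected component of $\Coxgraph_{Cox}$ on exactly two vertices; since $\Coxgraph_{Cox}$ is connected this forces $\Coxgraph_{Cox} = \{v, v'\}$, i.e. rank $2$, contradicting rank $\geq 3$. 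Hence $V_p \cup V_\infty = \emptyset$, so $f \equiv 2$, $\Coxgraph = \Gamma$, and $D(\Gamma) = W(\Coxgraph)$ is affine of rank $\geq 3$.

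For the ``moreover'' statement I would use the same component analysis at rank $2$. Here $W(\Coxgraph)$ is of type $\widetilde{I_1}$ exactly when $\Coxgraph_{Cox}$ is a single $\infty$-edge, so $|V(\Coxgraph)| = |V(\Gamma)| + |V_p \cup V_\infty| = 2$. As $V_p \cup V_\infty \subseteq V(\Gamma)$, the only possibilities for the pair $(|V(\Gamma)|, |V_p \cup V_\infty|)$ are $(2,0)$ and $(1,1)$. In the first case $f \equiv 2$ and $\Coxgraph = \Gamma$, so $D(\Gamma)$ is a Coxeter group of type $\widetilde{I_1}$. In the second, $\Gamma$ is a single vertex $v$; if $v \in V_p$ then $\{v,v'\}$ spans an edge labelled $f(v)$ and $W(\Coxgraph) \cong I_2(f(v))$ is finite dihedral, not $\widetilde{I_1}$, so we must have $v \in V_\infty$, and then $\Coxgraph$ has no $\{v,v'\}$ edge, giving $W(\Coxgraph) \cong D_\infty = \widetilde{I_1}$ and $D(\Gamma) = \langle v \rangle \cong \mathbb{Z}$. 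Running these two computations in reverse yields the converse.

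The step I expect to require the most care is the translation between the Dyer graph $\Coxgraph$ and the Coxeter diagram $\Coxgraph_{Cox}$ for vertices of $V_\infty$: there the \emph{absence} of an edge $\{v, v'\}$ in $\Coxgraph$ must be read as an $\infty$-labelled edge in $\Coxgraph_{Cox}$, so that a single $V_\infty$ vertex contributes a genuine $\widetilde{I_1}$-component rather than a finite or disconnected piece. Getting this right is precisely what makes $D(\Gamma) \cong \mathbb{Z}$ correspond to $W(\Coxgraph)$ of type $\widetilde{I_1}$, and what cleanly separates the rank-$\geq 3$ statement (where such a two-vertex component is forbidden by connectedness) from the rank-$2$ statement.
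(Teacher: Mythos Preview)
Your overall strategy matches the paper's, and the forward direction together with the ``moreover'' statement are handled correctly. However, there is a genuine gap in your converse argument for the first equivalence.

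You claim that for $v \in V_p \cup V_\infty$, the pair $\{v, v'\}$ forms a connected component of $\Coxgraph_{Cox}$. This is false in general. You correctly observe that label-$2$ edges of $\Coxgraph$ disappear in $\Coxgraph_{Cox}$, but you forget the dual phenomenon: \emph{non-edges} of $\Coxgraph$ become $\infty$-edges of $\Coxgraph_{Cox}$. You are aware of this conversion for the pair $\{v,v'\}$ itself (and use it in the $V_\infty$ case), but you do not apply it to pairs $\{v,u\}$ with $u \in V(\Gamma)$. Whenever $|V(\Gamma)| \geq 2$, irreducibility of $\Gamma$ forces $v$ to have a non-neighbour $u$ in $\Gamma$ (otherwise $v$, all of whose edges carry label $2$, would split off as a direct factor); this $u$ is then joined to $v$ by an $\infty$-edge in $\Coxgraph_{Cox}$, so $\{v,v'\}$ is \emph{not} a component. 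For a concrete counterexample, take $\Gamma$ with $V(\Gamma) = \{v,w\}$, $f(v)=3$, $f(w)=2$, and no edge: then $\Coxgraph_{Cox}$ is the path $v' \text{---} v \overset{\infty}{\text{---}} w$.

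The fix is exactly what the paper does: instead of arguing via connected components, observe that every edge of $\Coxgraph_{Cox}$ from $v$ or $v'$ to a third vertex is labelled $\infty$. Since no irreducible affine Coxeter diagram of rank $\geq 3$ contains an $\infty$-edge (inspect Figure~\ref{FigureAffines}), the presence of any such edge already contradicts affine of rank $\geq 3$; and if no such edge exists then your component argument does go through and forces rank $2$. Either way you reach the desired contradiction, but the first branch is the one that actually occurs when $|V(\Gamma)| \geq 2$.
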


\begin{proof}
If $D(\Gamma)$ is an affine Coxeter group of rank $\geq 3$, we have $\Coxgraph = \Gamma$. In particular, $D(\Gamma) = W(\Coxgraph)$ so the statement is tautological. We focus on the other direction.

Let $v \in V_p \cup V_{\infty}$. By Corollary \ref{CorollaryNumberOfNon2Edges}, the two vertices $v, v' \in V(\Coxgraph)$ are not joined by an edge labelled $2$. Moreover, every edge of $\Coxgraph$ of the form $\{v, u\}$ or $\{v', u\}$ where $u \in V(\Coxgraph) \setminus \{v, v'\}$ is labelled $2$, so every edge of $\Coxgraph_{Cox}$ of the form $\{v, u\}$ or $\{v', u\}$ where $u \in V(\Coxgraph) \setminus \{v, v'\}$ is labelled $\infty$. By Definition \ref{DefClassificationAffineCoxeter}, this directly implies that either $W(\Coxgraph)$ is not affine, or that it is affine of rank $2$ (and $\Coxgraph$ is of type $\widetilde{I_1}$). This gives a contradiction, so it must be that $V_p \cup V_{\infty} = \emptyset$. But then it follows that $\Coxgraph = \Gamma$ and $D(\Gamma) = W(\Coxgraph)$ is an affine Coxeter group of rank $\geq 3$.

The last statement of the lemma is clear.
\end{proof}

\begin{theorem} [Theorem C] \label{ThmAH2}
    Let $D(\Gamma)$ be a Dyer group, and let $\Gamma_1, \dots, \Gamma_n$ be the irreducible components of $\Gamma$. Then the following are equivalent:
    \begin{enumerate}
        \item $D(\Gamma)$ is acylindrically hyperbolic ;
        \item Exactly one of the $D(\Gamma_i)$'s is infinite, and it is not an affine Coxeter group and not isomorphic to $\mathbb{Z}$ ;
        \item $W(\Coxgraph)$ is acylindrically hyperbolic ;
        \item Exactly one of the $W(\Coxgraph_i)$'s is infinite, and it is not affine.
    \end{enumerate}
\end{theorem}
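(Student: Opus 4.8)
The plan is to push everything through the finite-index Coxeter group $W(\Coxgraph)$ and then invoke the classification of acylindrically hyperbolic Coxeter groups, proving the three equivalences $(1)\Leftrightarrow(3)$, $(2)\Leftrightarrow(4)$ and $(3)\Leftrightarrow(4)$, which together yield the statement. The geometric content is concentrated entirely in $(3)\Leftrightarrow(4)$; the other two equivalences are a finite-index reduction and a combinatorial dictionary, respectively.

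For $(1)\Leftrightarrow(3)$ I would recall that $D(\Gamma)$ has finite index in $W(\Coxgraph)$ by Theorem \ref{ThmFiniteIndexEmbedding}, and that acylindrical hyperbolicity is a commensurability invariant: one direction is Facts \ref{FactsAH}(1) (passage to finite-index subgroups), and the reverse passage to a finite-index overgroup is the corresponding invariance established in \cite{osin2016acylindrically}. For $(2)\Leftrightarrow(4)$ I would argue combinatorially. By Lemma \ref{LemmaIrreducibilityPassesToW} the irreducible components correspond, and (as already observed in the proof of Theorem \ref{TheoremHyperbolicity}) $D(\Gamma_i)$ is infinite if and only if $W(\Coxgraph_i)$ is infinite, so the clause ``exactly one infinite component'' transfers verbatim. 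For the unique infinite component I would apply Lemma \ref{LemmaAffineIIFAffine}: since the irreducible affine Coxeter groups are precisely those of rank $\geq 3$ together with the rank-$2$ group of type $\widetilde{I_1}$, the two equivalences in that lemma combine to show that $W(\Coxgraph_i)$ is affine if and only if $D(\Gamma_i)$ is either an affine Coxeter group or isomorphic to $\mathbb{Z}$. Negating this matches condition (2) against condition (4) exactly.

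The substantive step is $(3)\Leftrightarrow(4)$, the classification of acylindrically hyperbolic Coxeter groups. For $(4)\Rightarrow(3)$, the unique infinite irreducible component $W(\Coxgraph_i)$ is infinite and non-affine, hence of indefinite type; the key input is that such a Coxeter group admits a rank-one isometry of its Davis complex and is not virtually cyclic, so it is acylindrically hyperbolic. Since the remaining components are finite, $W(\Coxgraph_i)$ has finite index in $W(\Coxgraph)$, and commensurability invariance upgrades this to $W(\Coxgraph)$ itself. For $(3)\Rightarrow(4)$, if $W(\Coxgraph)$ is acylindrically hyperbolic then by Facts \ref{FactsAH}(2) it does not split as a direct product of infinite groups, forcing at most one infinite component, while Facts \ref{FactsAH}(3) forces it to be infinite and not virtually cyclic; hence exactly one component $W(\Coxgraph_i)$ is infinite, and it inherits acylindrical hyperbolicity as a finite-index subgroup. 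Were it affine it would be virtually abelian, hence amenable with no non-abelian free subgroup, contradicting Facts \ref{FactsAH}(3); so it is non-affine. The main obstacle is precisely the positive half of this step: establishing that an infinite irreducible non-affine Coxeter group is genuinely acylindrically hyperbolic is not elementary and rests on the rank-one/contracting geometry of the Davis complex furnished by \cite{caprace2010rank}, whereas the remainder of the argument is the finite-index reduction and the combinatorics of Construction \ref{ConstructionCoxeterGraph}.
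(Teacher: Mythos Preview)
Your proposal is correct and uses the same ingredients as the paper: the finite-index embedding $D(\Gamma)\hookrightarrow W(\Coxgraph)$, Lemma \ref{LemmaIrreducibilityPassesToW} and Lemma \ref{LemmaAffineIIFAffine} for the combinatorial dictionary, and the Caprace--Sageev rank-one criterion on the Davis complex combined with Sisto's theorem for the geometric core.

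The only organizational difference is that the paper proves the cycle $(4)\Rightarrow(3)\Rightarrow(1)\Rightarrow(2)\Rightarrow(4)$, whereas you prove three two-sided equivalences. Your arrangement forces you to invoke passage of acylindrical hyperbolicity to finite-index \emph{overgroups} (both in $(1)\Rightarrow(3)$ and when lifting from $W(\Coxgraph_i)$ to $W(\Coxgraph)$), a fact the paper does not record in Facts \ref{FactsAH} and never needs because the cycle only ever goes downward via Facts \ref{FactsAH}(1). This is not a gap---the overgroup direction is indeed in the literature---but the paper's cycle is slightly more economical in its black boxes. Conversely, your $(3)\Rightarrow(4)$ is the natural Coxeter-side statement and makes the classification of acylindrically hyperbolic Coxeter groups a self-contained lemma, which has some expository value.
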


\begin{proof}
    \noindent [(4) $\Rightarrow$ (3)] This is a standard result, but we recall the argument here. It is known from the work of Moussong \cite{moussong1988hyperbolic} that any Coxeter group $W(\Lambda)$ acts geometrically on its associated Davis complex $\Sigma_{\Lambda}$ and that this complex is CAT(0) for some piecewise Euclidean metric. Decompose $W(\Lambda) = W(\Lambda_1) \times \dots \times W(\Lambda_k)$ as a product of its irreducible factors. Then it was proved in \cite[Proposition 4.5]{caprace2010rank} that $W(\Lambda)$ contains a rank-one element for the action on $\Sigma_{\Lambda}$ if and only if exactly one of the $W(\Lambda_i)$ is infinite, and it is not an affine Coxeter group of rank $\geq 3$. In particular then, our $W(\Coxgraph)$ admits a rank-one element for its action on $\Sigma_{\Coxgraph}$.

    Sisto (\cite[Theorem 1.5]{sisto2018contracting}) proved that for proper isometric actions on proper CAT(0) spaces (such as $W(\Coxgraph) \curvearrowright \Sigma_{\Coxgraph}$), every element acting as a rank-one isometry is contained in an hyperbolically embedded virtually cyclic subgroup. In particular, it then follows from \cite[Theorem 1.2]{osin2016acylindrically} that either $W(\Coxgraph)$ is virtually cyclic, or it is acylindrically hyperbolic. But $W(\Coxgraph)$ is infinite, so it can't be virtually cyclic, or it would be affine. 
    \medskip

    \noindent [(3) $\Rightarrow$ (1)] This directly follows from Theorem \ref{ThmFiniteIndexEmbedding} and the fact that acylindrical hyperbolicity passes to finite index subgroups (see Fact \ref{FactsAH}.(1)).
    \medskip
    
    \noindent [(1) $\Rightarrow$ (2)] By Fact \ref{FactsAH}.(2) we cannot have two distinct infinite factors $D(\Gamma_i)$ and $D(\Gamma_j)$. If no factor was infinite, then $D(\Gamma)$ would be finite hence also not acylindrically hyperbolic. So we assume that all factors are finite except one factor $D(\Gamma_1)$. If $D(\Gamma_1)$ is an irreducible affine Coxeter group, then $D(\Gamma_1)$ (and thus $D(\Gamma)$) is virtually abelian. This also contradicts acylindrical hyperbolicity (see Fact \ref{FactsAH}.(3)). The same argument holds if $D(\Gamma_1) \cong \mathbb{Z}$.
    \medskip

    \noindent [(2) $\Rightarrow$ (4)] The fact that exactly one irreducible factor $W(\Coxgraph_i)$ of $W(\Coxgraph)$ is infinite is a consequence of Lemma \ref{LemmaIrreducibilityPassesToW} and Theorem \ref{ThmFiniteIndexEmbedding}. Moreover it can't be that $W(\Coxgraph_i)$ is affine of rank $\geq 3$, or $D(\Gamma_i)$ would be an affine Coxeter group of rank $\geq 3$ by Lemma \ref{LemmaAffineIIFAffine}.
    
    Finally, if $W(\Coxgraph_i)$ were affine of rank $\leq 2$ then it has to be $\widetilde{I_1}$ by Definition \ref{DefClassificationAffineCoxeter}. By Lemma \ref{LemmaAffineIIFAffine}, this means that $D(\Gamma)$ is either a Coxeter group of type $\widetilde{I_1}$, or it is isomorphic to $\mathbb{Z}$, contradicting our assumptions.
\end{proof}

\bibliographystyle{alpha}
\bibliography{Mainbib}

\begin{thebibliography}{MRSV24}

\bibitem[Bou68]{Bourbaki1968Groupes}
N.~Bourbaki.
\newblock {\em \'El\'ements de math\'ematique. {F}asc. {XXXIV}. {G}roupes et
  alg\`ebres de {L}ie. {C}hapitre {IV}: {G}roupes de {C}oxeter et syst\`emes de
  {T}its. {C}hapitre {V}: {G}roupes engendr\'es par des r\'eflexions.
  {C}hapitre {VI}: syst\`emes de racines}, volume No. 1337 of {\em Actualit\'es
  Scientifiques et Industrielles [Current Scientific and Industrial Topics]}.
\newblock Hermann, Paris, 1968.

\bibitem[CD95]{ChaDav}
Ruth Charney and Michael~W. Davis.
\newblock Finite {$K(\pi, 1)$}s for {A}rtin groups.
\newblock In {\em Prospects in topology ({P}rinceton, {NJ}, 1994)}, volume 138
  of {\em Ann. of Math. Stud.}, pages 110--124. Princeton Univ. Press,
  Princeton, NJ, 1995.

\bibitem[CF10]{caprace2010rank}
Pierre-Emmanuel Caprace and Koji Fujiwara.
\newblock Rank-one isometries of buildings and quasi-morphisms of kac--moody
  groups.
\newblock {\em Geometric and Functional Analysis}, 19:1296--1319, 2010.

\bibitem[Cox35]{coxeter1935complete}
Harold~SM Coxeter.
\newblock The complete enumeration of finite groups of the form ${R}_i^2=({R}_i
  {R}_j)^{k_{ij}} = 1$.
\newblock {\em Journal of the London Mathematical Society}, 1(1):21--25, 1935.

\bibitem[Dav08]{davis2008geometrytopologycoxeter}
Michael~W. Davis.
\newblock {\em The geometry and topology of {C}oxeter groups}, volume~32 of
  {\em London Mathematical Society Monographs Series}.
\newblock Princeton University Press, Princeton, NJ, 2008.

\bibitem[Dye90]{Dyer1990ReflectionSubgroups}
Matthew Dyer.
\newblock Reflection subgroups of {C}oxeter systems.
\newblock {\em Journal of Algebra}, 135(1):57--73, 1990.

\bibitem[Gre90]{Green}
Elisabeth~Ruth Green.
\newblock {\em Graph products of groups}.
\newblock PhD thesis, University of Leeds, 1990.

\bibitem[Gro87]{gromov1987hyperbolic}
M~Gromov.
\newblock Hyperbolic groups.
\newblock {\em Essays in Group Theory, pages/Springer-Verlag}, 1987.

\bibitem[Hum90]{Humphreys1990ReflectionGroups}
James~E. Humphreys.
\newblock {\em Reflection groups and {C}oxeter groups}, volume~29 of {\em
  Cambridge Studies in Advanced Mathematics}.
\newblock Cambridge University Press, Cambridge, 1990.

\bibitem[Mou88]{moussong1988hyperbolic}
Gabor Moussong.
\newblock {\em Hyperbolic Coxeter groups}.
\newblock PhD thesis, The Ohio State University, 1988.

\bibitem[MRSV24]{Michael2024involutionscoxetergroups}
Anna Michael, Yuri~Santos Rego, Petra Schwer, and Olga Varghese.
\newblock Involutions in {C}oxeter groups, 2024.

\bibitem[Osi16]{osin2016acylindrically}
Denis Osin.
\newblock Acylindrically hyperbolic groups.
\newblock {\em Transactions of the American Mathematical Society},
  368(2):851--888, 2016.

\bibitem[PS23]{ParSoe2023WordProblem}
Luis Paris and Mireille Soergel.
\newblock Word problem and parabolic subgroups in {D}yer groups.
\newblock {\em Bull. Lond. Math. Soc.}, 55(6):2928--2947, 2023.

\bibitem[Ser77]{serre1977arbres}
Jean-Pierre Serre.
\newblock {\em Arbres, Amalgames, ${SL}_2$}.
\newblock Asterisque, 1977.

\bibitem[Sis18]{sisto2018contracting}
Alessandro Sisto.
\newblock Contracting elements and random walks.
\newblock {\em Journal f{\"u}r die reine und angewandte Mathematik (Crelles
  Journal)}, 2018(742):79--114, 2018.

\bibitem[Soe24]{Soergel2024Complex}
Mireille Soergel.
\newblock A generalization of the {D}avis-{M}oussong complex for {D}yer groups.
\newblock {\em Journal of Combinatorial Algebra}, 8(1-2):209--249, 2024.

\bibitem[Tit69]{Tits}
Jacques Tits.
\newblock Le probl\`eme des mots dans les groupes de {C}oxeter.
\newblock In {\em Symposia {M}athematica ({INDAM}, {R}ome, 1967/68), {V}ol. 1},
  pages 175--185. Academic Press, London, 1969.

\end{thebibliography}

\Addresses

\end{document}